\documentclass{amsart}%
\usepackage[utf8]{inputenc}
\usepackage{amsmath, amssymb, amsthm, amsfonts}
\usepackage{tikz-cd}
\usepackage{amsmath}
\usepackage{amsfonts}
\usepackage{amssymb}
\usepackage{graphicx}%
\providecommand{\U}[1]{\protect\rule{.1in}{.1in}}
\newtheorem{definition}{Definition}[section]
\newtheorem{example}{Example}[section]
\newtheorem{remark}{Remark}[section]
\newtheorem{theorem}{Theorem}[section]
\newtheorem{corollary}{Corollary}[section]
\newtheorem{lemma}{Lemma}[section]
\newtheorem{proposition}{Proposition}[section]
\begin{document}
\title[Quasi-sdf-absorbing ideals]{Quasi sdf-absorbing ideals in commutative rings}
\date{}
\author{Violeta Leoreanu-Fotea$^{*} $}
\address{Faculty of Mathematics, Al.I. Cuza University, Bd. Carol I, No. 11, 700506
Ia\c{s}i, Romania.}
\email{foteavioleta@gmail.com}
\author{Ece Yetkin Celikel}
\address{Department of Software Engineering, Hasan Kalyoncu University, Gaziantep, T\"{u}rkiye}
\email{ece.celikel@hku.edu.tr, yetkinece@gmail.com}
\author{Tarik Arabaci}
\address{Department of Basic Science, Faculty of Engineering and Architecture, Istanbul
Geli\c{s}im University, Istanbul, T\"{u}rkiye}
\email{tarabaci@gelisim.edu.tr}
\author{Unsal Tekir}
\address{Department of Mathematics, Marmara University, Istanbul, T\"urkiye}
\email{utekir@marmara.edu.tr}
\thanks{*Corresponding author: foteavioleta@gmail.com}
\keywords{Square-difference factor absorbing ideal, sdf-absorbing primary ideal,
quasi-sdf-absorbing ideal}
\subjclass{13A15, 13C05, 13C13}

\begin{abstract}
This paper introduces and studies quasi sdf-absorbing ideals as a
generalization of sdf-absorbing ideals. We investigate the stability of this
property under various constructions, including localization, surjective
images, Nagata idealizations, and amalgamations. We establish conditions under
which the radical of such ideals is prime and discuss a specific class of
rings where quasi sdf-absorption implies the sdf-absorbing primary property.
The study concludes with a classification of these ideals in $\mathbb{Z}$ and
examples distinguishing them from related ideal classes.

\end{abstract}
\maketitle

\section{Introduction}

The study of ideal-theoretic generalizations of prime and primary ideals has
attracted considerable attention in recent years. Such generalizations often
arise by weakening the defining conditions of classical notions while
preserving important structural properties, a framework thoroughly discussed
in classical texts like \cite{AM1969} and \cite{Lang}. A notable direction in
this area was initiated by A. Badawi \cite{B2007}, who introduced the concept
of $2$-absorbing ideals, extending the classical notion of prime ideals. This
work was further expanded to the primary case in \cite{BTY}, where
$2$-absorbing primary ideals were characterized.

Throughout this paper, all rings are assumed to be commutative with identity.
Motivated by similar ideas, Anderson, Badawi, and Coykendall \cite{AB2024}
recently introduced the class of square-difference factor absorbing ideals
(briefly, \emph{sdf-absorbing ideals}) of rings.. An ideal $I$ of a ring $R$
is called sdf-absorbing if for all non-zero $a,b\in R$, $a^{2}-b^{2}\in I$
implies $a-b\in I\;\text{or}\;a+b\in I.$ A natural extension of this notion,
which involves the radical of the ideal, was proposed by Khashan, Yetkin
Celikel, and Tekir \cite{KhashanCelikel}. They defined an ideal $I$ of a ring
$R$ to be \emph{sdf-absorbing primary} if for all $a,b\in R$, $a^{2}-b^{2}\in
I$ implies $a+b\in\sqrt{I}\;\text{or}\;a-b\in I.$ A fundamental structural
result established in \cite{KhashanCelikel} states that if $I$ is an
sdf-absorbing primary ideal, then its radical $\sqrt{I}$ is necessarily
sdf-absorbing. However, the converse of this implication does not hold in
general (See Example 2.1 (i)).

Inspired by this connection, the present paper is devoted to the study of
\emph{quasi sdf-absorbing ideals}, defined as those ideals $I$ whose radicals
$\sqrt{I}$ are sdf-absorbing. This class provides a flexible framework for
analyzing the "absorbing" nature of an ideal in rings that are not necessarily
reduced. We investigate the behavior of these ideals under various
constructions, including the amalgamation extension, a structure recently
surveyed in \cite{ElKhalfi-Kim-Mahdou}. Specifically, we provide a complete
characterization of quasi sdf-absorbing ideals in the ring of integers
$\mathbb{Z}$ and analyze their transfer properties in idealizations and
amalgamated duplications. Furthermore, we identify and explore a particular
class of rings where the quasi sdf-absorbing property and the sdf-absorbing
primary property coincide, examining how this relationship is preserved under
fundamental ring-theoretic operations.

\section{Properties of Quasi sdf-absorbing ideals}

In 1946, Fuchs introduced the concept of quasi-primary ideals. According to
\cite{F} a proper ideal is said to be quasi-primary if its radical is
prime.\ More generally, in 1995, Jayaram and Johnson \cite{J} studied this
notion in a more general lattice-theoretic framework. Motivated by these
concepts, we introduce in this work a natural generalization, namely the class
of quasi sdf-absorbing ideals. The main result of this study is presented below.

\begin{definition}
Let $R$ be a ring and $I$ a proper ideal of $R.$ We say that $I$ is
\emph{quasi sdf-absorbing} if its radical $\sqrt{I}$ is a sdf-absorbing ideal
of $R.$
\end{definition}

It follows immediately that every quasi-primary and sdf-absorbing ideals are
quasi sdf-absorbing. Nevertheless, the converse implication does not hold in
general, as illustrated by the following example.

\begin{example}
\ 
\end{example}

\begin{enumerate}
\item Consider the ideal $I=4q^{m}\mathbb{Z}$ of $\mathbb{Z}$, where $q$ is an
odd prime and $m\geq1$. Then $I$ is a quasi sdf-absorbing ideal, but it is not
an sdf-absorbing primary ideal. Indeed, $\sqrt{I}=2q\mathbb{Z},$ which is an
sdf-absorbing ideal, by \cite[Example 2.8 (a)]{AB2024}, therefore, $I$ is
quasi sdf-absorbing. However, $I$ is not an sdf-absorbing primary ideal.
Consider $a=2q^{m}+1,\ b=1.$ Then $a^{2}-b^{2}=(2q^{m}+1)^{2}-1^{2}%
=4q^{m}(q^{m}+1)\in I.$ However, $a-b=2q^{m}\notin I$ and for any positive
integer $k\geq1$, $(a+b)^{k}=(2q^{m}+2)^{k}=2^{k}(q^{m}+1)^{k}\notin I.$
Hence, $I$ is not an sdf-absorbing primary ideal.

\item Let $K$ be a field, let $q$ be an odd prime, and $m\geq1$. Consider the
ideal $I=4q^{m}K[x_{1},x_{2},\dots,x_{n}]$ of $K[x_{1},x_{2},\dots,x_{n}].$
Then $I$ is quasi sdf-absorbing, but it is not quasi-primary. Indeed, the
radical is $\sqrt{I}=2qK[x_{1},\dots,x_{n}],$ which is sdf-absorbing by
\cite[Corollary 4.3]{AB2024}. Hence $I$ is quasi sdf-absorbing. The ideal $I$
is not quasi-primary, since $\sqrt{I}$ is not prime.

\item In any ring $R$ with characteristic $2$ (in particular, if $R$ is a
Boolean ring), every proper ideal of $R$ is quasi sdf-absorbing. Indeed, let
$I$ be a proper ideal of $R$ with $char(R)=2$. Suppose that $a^{2}-b^{2}%
\in\sqrt{I}$, for some $0\neq a,b\in R.$ Then $(a-b)^{2}\in\sqrt{I}$ which
implies $a-b\in\sqrt{I}$, so $\sqrt{I}$ is sdf-absorbing. Thus $I$ is a quasi
sdf-absorbing ideal of $R.$
\end{enumerate}

Next, we verify that if $2$ is a unit element in a ring, then the quasi
sdf-absorbing and quasi primary ideals coincide.

\begin{proposition}
Let $R$ be a ring such that $2$ is a unit element. Then, a proper ideal of $R$
is quasi sdf-absorbing if and only if it is quasi primary.
\end{proposition}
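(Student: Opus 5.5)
The plan is to reduce the statement to a claim about the radical alone. Since $I$ is proper, $J:=\sqrt{I}$ is a proper radical ideal. By definition, $I$ is quasi sdf-absorbing iff $J$ is sdf-absorbing, and $I$ is quasi primary iff $J$ is prime. So it suffices to show that, when $2$ is a unit, a proper radical ideal $J$ is sdf-absorbing if and only if it is prime.

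The direction "prime $\Rightarrow$ sdf-absorbing" is immediate. If $a^{2}-b^{2}=(a-b)(a+b)\in J$ with $J$ prime, then $a-b\in J$ or $a+b\in J$. This direction needs neither that $2$ is a unit nor that $a,b$ are nonzero.

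For the converse, assume $J$ is sdf-absorbing and let $xy\in J$; we want $x\in J$ or $y\in J$. The key step uses the invertibility of $2$ to write $xy$ as a difference of squares. Put
\[
a=2^{-1}(x+y),\qquad b=2^{-1}(y-x).
\]
Then $a-b=x$, $a+b=y$, and $a^{2}-b^{2}=(a-b)(a+b)=xy\in J$. If both $a$ and $b$ are nonzero, the sdf-absorbing property gives $x=a-b\in J$ or $y=a+b\in J$, as required.

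The only obstacle is that the sdf-absorbing condition applies only to nonzero $a,b$, so the degenerate cases must be handled separately, using that $J$ is a radical ideal.
\begin{itemize}
\item If $a=0$, then $y=-x$, so $-x^{2}=xy\in J$. Hence $x^{2}\in J$, and therefore $x\in J$.
\item If $b=0$, then $y=x$, so $x^{2}=xy\in J$, and again $x\in J$.
\end{itemize}
Thus $J$ is prime, which completes the equivalence.
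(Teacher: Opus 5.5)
Your proof is correct and is essentially the paper's argument. The paper applies the sdf-absorbing property of $\sqrt{I}$ to $(a+b)^2-(a-b)^2=4ab$ and then cancels the unit $2$ from $2a$ or $2b$, whereas you pre-scale by $2^{-1}$ so the difference of squares is exactly $xy$. The degenerate cases $x=\pm y$ are handled the same way in both, via radicality.
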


\begin{proof}
Let $I$ be a quasi sdf-absorbing ideal of $R$ and $a,b\in R$ such that
$ab\in\sqrt{I}$. Then $(a+b)^{2}-(a-b)^{2}=4ab\in\sqrt{I}$. If $a\neq b$ and
$a\neq-b$, $2a=(a+b)+(a-b)\in\sqrt{I}$ or $2b=(a+b)-(a-b)\in\sqrt{I}.$ Since
$2\in U(R)$, we have $a\in\sqrt{I}$ or $b\in\sqrt{I}$. Now assume that $a=b$
(respectively, $a=-b$). Then, we have $a^{2}\in\sqrt{I}$ (respectively,
$-a^{2}\in\sqrt{I}$) which implies $a\in\sqrt{I}$. Thus, $\sqrt{I}$ is prime,
and so $I$ is a quasi primary ideal of $R.$ The converse part is straightforward.
\end{proof}

The following observation will be useful in the sequel, as it gives another
equivalent form of the definition.

\begin{remark}
\label{rr}Let $I$ be a proper ideal of a ring $R.$ Then the following
assertions are equivalent:
\end{remark}

\begin{enumerate}
\item $I$ is a quasi sdf-absorbing ideal of $R.$

\item For $0\neq a,b\in R$ such that $a^{2}-b^{2}\in\sqrt{I}$, we have
$a-b\in\sqrt{I}$ or $a+b\in\sqrt{I}$.

\item For $a,b\in R$ such that $a^{2}-b^{2}\in\sqrt{I}$, we have $a-b\in
\sqrt{I}$ or $a+b\in\sqrt{I}$.
\end{enumerate}

\begin{proof}
(1)$\Rightarrow$(2) and (3)$\Rightarrow$(1) are clear by the definition.
Hence, to complete the proof, it is sufficient to show that (2)$\Rightarrow
$(3). Suppose $a^{2}-b^{2}\in\sqrt{I}$ for some $a,b\in R.$ If $a,b$ are
nonzero, we are done by (2). Without loss of generality, assume that $b=0.$
Then $a^{2}=a^{2}-b^{2}\in\sqrt{I}$ implies $a\in\sqrt{I}$, and hence both of
$a-b$ and $a+b$ belongs to $\sqrt{I}$.
\end{proof}

In the following, analogous to the results obtained in \cite[Theorems 3.7,
4.1, 4.5, 4.6]{AB2024} we establish their counterparts in the broader context
of quasi sdf-absorbing ideals.

\begin{theorem}
\label{Th} Let $R$ be a ring and $I=\bigcap_{j=1}^{n}P_{j}$ be an intersection
of comaximal prime ideals in $R$. Then $I$ is quasi sdf-absorbing if and only
if at most one factor $R/P_{j}$ has characteristic different from $2$.
\end{theorem}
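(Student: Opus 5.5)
Since $I=\bigcap_{j=1}^{n}P_{j}$ is an intersection of prime ideals, it is radical, so $\sqrt{I}=I$. Hence $I$ is quasi sdf-absorbing exactly when $I$ itself is sdf-absorbing. By Remark \ref{rr} (3), this means: $a^{2}-b^{2}\in I$ implies $a-b\in I$ or $a+b\in I$ for all $a,b\in R$. We may assume the $P_j$ are distinct, since repeating a factor changes neither $I$ nor the condition. The plan is to pass to the product ring via the Chinese Remainder Theorem. Comaximality gives $R/I\cong\prod_{j=1}^{n}R/P_{j}$, a finite product of integral domains $D_j=R/P_j$. The condition lifts and descends through the surjection $R\to R/I$: $I$ is sdf-absorbing in $R$ iff the zero ideal of $R/I$ satisfies the condition, i.e. for all $x,y\in\prod D_j$, $x^2=y^2$ forces $x=y$ or $x=-y$. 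So the statement reduces to the following: in a finite product of domains, $x^2=y^2\Rightarrow x=\pm y$ holds iff at most one $D_j$ has characteristic $\neq 2$.

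For sufficiency, suppose $D_2,\dots,D_n$ have characteristic $2$ and $x^2=y^2$. In a domain, $x_j^2=y_j^2$ gives $(x_j-y_j)(x_j+y_j)=0$, so $x_j=y_j$ or $x_j=-y_j$. In characteristic $2$ the two options coincide, so $x_j=y_j=-y_j$ for $j\ge2$. In $D_1$ choose the sign $\varepsilon\in\{1,-1\}$ with $x_1=\varepsilon y_1$. Then $x=\varepsilon y$ globally, because for $j\geq 2$ we have $\varepsilon y_j=y_j$. If all factors have characteristic $2$, the same argument works with $\varepsilon=1$.

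For necessity, suppose $D_1$ and $D_2$ both have characteristic $\neq 2$. Take $x=(1,1,\dots,1)$ and $y=(1,-1,1,\dots,1)$, i.e. $y$ differs from $x$ only by a sign in the second coordinate. Then $x^2=y^2$. However, $x-y$ has second coordinate $2\neq0$ in $D_2$, and $x+y$ has first coordinate $2\neq 0$ in $D_1$. Pulling back, choose $a,b\in R$ mapping to $x,y$; these are nonzero since $x,y\neq 0$. Then $a^2-b^2\in I$ while $a\pm b\notin I$, so $I=\sqrt I$ is not sdf-absorbing.

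The main point to check carefully is the translation between the ideal $I$ and the zero ideal of $\prod R/P_j$. This requires the CRT isomorphism, which needs comaximality, and the observation that the nonzero hypothesis in the definition is harmless by Remark \ref{rr}. The necessity direction also needs $n\ge2$, which is automatic if two factors have characteristic $\neq2$. Everything else is routine coordinatewise computation in domains.
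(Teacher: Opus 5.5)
Your proof is correct, but it takes a more self-contained route than the paper. The paper makes the same first reduction you do, namely $\sqrt{I}=\bigcap_j\sqrt{P_j}=\bigcap_j P_j=I$, so that for $I$ being quasi sdf-absorbing is the same as being sdf-absorbing. It then simply invokes \cite[Theorem 4.1]{AB2024}, which is exactly the statement that an intersection of comaximal primes is sdf-absorbing if and only if at most one $R/P_j$ has characteristic different from $2$.

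You instead prove that cited result from scratch. You use the Chinese Remainder Theorem to reduce to the condition $x^2=y^2\Rightarrow x=\pm y$ in $\prod_j R/P_j$. You verify this coordinatewise using that each factor is a domain, and you refute it with the sign vector $(1,-1,1,\dots,1)$ when two factors have characteristic $\neq 2$.

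The paper's version is a one-line reduction that outsources all the content. Yours shows where each hypothesis is used. The sufficiency direction needs only that the kernel of $R\to\prod_j R/P_j$ is $I$, which holds for any finite intersection of primes. Comaximality, through surjectivity of the CRT map, is needed only to lift the sign vector in the necessity direction.

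One small inaccuracy: your remark that repeating a factor changes neither $I$ nor the condition is false as stated, since a repeated factor of characteristic $\neq 2$ would be counted twice. The point is moot, though, because pairwise comaximal primes are automatically distinct.
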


\begin{proof}
Note that $\sqrt{I}=\sqrt{\cap_{j=1}^{n}P_{j}}=\cap_{j=1}^{n}\sqrt{P_{j}}$ and
$\sqrt{P_{j}}=P_{j}$ 's are also comaximal prime ideals for $j=1,...,n.$ Then,
the claim is clear by \cite[Theorem 4.1]{AB2024}.
\end{proof}

\begin{theorem}
Let $n\in\mathbb{N}$. Then $n\mathbb{Z}$ is a quasi-sdf-absorbing ideal if and
only if $n$ has at most one odd prime divisor, i.e., $n=q^{m}$ for some
$m\geq1$ or $n=2^{k}q^{m}$ for some $k\geq1,m\geq0$ and a odd prime integer
$q$.
\end{theorem}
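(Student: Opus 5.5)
The plan is to reduce the question to Theorem \ref{Th} by computing the radical. Note first that $n\geq 2$ is needed for $n\mathbb{Z}$ to be proper; the case $n=0$ ($0\mathbb{Z}$ is prime) is outside $\mathbb{N}$ as used here. Write the prime factorization $n=p_1^{e_1}\cdots p_r^{e_r}$ with distinct primes $p_j$ and $e_j\geq 1$. Then
\[
\sqrt{n\mathbb{Z}}=p_1\cdots p_r\mathbb{Z}=\bigcap_{j=1}^{r}p_j\mathbb{Z}.
\]
Distinct nonzero primes $p_j\mathbb{Z}$ in $\mathbb{Z}$ are maximal, hence pairwise comaximal. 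Moreover, $\sqrt{n\mathbb{Z}}$ is itself an intersection of comaximal prime ideals, and its radical is itself. So $n\mathbb{Z}$ is quasi sdf-absorbing if and only if $\bigcap_j p_j\mathbb{Z}$ is quasi sdf-absorbing, since both have the same radical.

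By Theorem \ref{Th}, this holds exactly when at most one factor $\mathbb{Z}/p_j\mathbb{Z}$ has characteristic different from $2$. The characteristic of $\mathbb{Z}/p_j\mathbb{Z}$ is $p_j$, so the condition says that at most one $p_j$ is odd.

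Finally, translate this into the stated forms. If $2\nmid n$, then $n=q^m$ with $q$ an odd prime and $m\geq1$. If $2\mid n$, then $n=2^kq^m$ with $k\geq1$ and $m\geq0$; here $m=0$ covers $n=2^k$.

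There is no real obstacle. The only points needing care are the bookkeeping that $\sqrt{n\mathbb{Z}}$ equals the intersection of the $p_j\mathbb{Z}$, so that Theorem \ref{Th} applies to it, and the fact that the radical of that intersection is itself. If one prefers to avoid relying on Theorem \ref{Th}, one can cite \cite[Example 2.8]{AB2024} directly on the squarefree generator $p_1\cdots p_r$.
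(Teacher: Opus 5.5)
Your proposal is correct and follows the paper's proof essentially step for step. Like the paper, you write $\sqrt{n\mathbb{Z}}=p_1\cdots p_r\mathbb{Z}$ as an intersection of pairwise comaximal maximal ideals, apply Theorem \ref{Th} (phrased, appropriately, for quasi sdf-absorbing ideals), and conclude that at most one $p_j$ can be odd. Your extra remarks are correct refinements that the paper leaves implicit: properness forces $n\geq 2$, and $n=0$ must be excluded because $0\mathbb{Z}$ is prime while $0$ has infinitely many odd prime divisors.
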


\begin{proof}
By definition, $n\mathbb{Z}$ is a quasi sdf-absorbing ideal of $\mathbb{Z}$ if
and only if its radical $\sqrt{n\mathbb{Z}}$ is an sdf-absorbing ideal. We
have $\sqrt{n\mathbb{Z}}=p_{1}p_{2}\dots p_{m}\mathbb{Z}$, where $p_{1}%
,p_{2},\dots,p_{m}$ are distinct prime divisors of $n$. The ideal $p_{1}%
p_{2}\dots p_{m}\mathbb{Z}$ can be written as an intersection of pairwise
comaximal prime ideals:
\[
p_{1}p_{2}\dots p_{m}\mathbb{Z}=p_{1}\mathbb{Z}\cap p_{2}\mathbb{Z}\cap
\dots\cap p_{m}\mathbb{Z}.
\]
According to Theorem \ref{Th}, an intersection of comaximal prime ideals is
sdf-absorbing if and only if at most one of the factor rings $\mathbb{Z}%
/p_{j}\mathbb{Z}$ has characteristic different from $2$. Note that
$\text{char}(\mathbb{Z}/p_{j}\mathbb{Z})=p_{j}$. Therefore, the condition is
satisfied if and only if $p_{1}p_{2}\dots p_{m}$ has at most one odd prime
divisor. This means that $n$ must be of the form $n=q^{m}$ for some $m\geq1$
or $n=2^{k}q^{m}$ for some $k\geq1,m\geq0.$
\end{proof}

\begin{theorem}
Let $I$ be a proper ideal of a ring $R$ and let $\sqrt{I}=\bigcap_{j=1}%
^{n}P_{j}$ be a finite intersection of pairwise comaximal prime ideals $P_{j}$
of $R$. Then the following are equivalent:

\begin{enumerate}
\item $I$ is quasi sdf-absorbing in $R$.

\item $I[x]$ is quasi sdf-absorbing in the polynomial ring $R[x]$.
\end{enumerate}
\end{theorem}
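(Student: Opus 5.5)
The plan is to reduce both conditions to the characteristic criterion of Theorem \ref{Th}, applied once in $R$ and once in $R[x]$. The first step is to compute the radical of $I[x]$. For any ideal $I$ of $R$ we have $\sqrt{I[x]}=\sqrt{I}[x]$: a polynomial $f$ is nilpotent modulo $I[x]$ exactly when its image in $(R/I)[x]$ is nilpotent. By the classical description of nilpotent polynomials, this happens exactly when every coefficient of $f$ is nilpotent in $R/I$, that is, lies in $\sqrt{I}$. Hence
\[
\sqrt{I[x]}=\Big(\bigcap_{j=1}^{n}P_{j}\Big)[x]=\bigcap_{j=1}^{n}P_{j}[x].
\]
The last equality holds because a polynomial lies in an intersection of extended ideals iff each coefficient lies in every $P_j$.

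Next I will check that the ideals $P_j[x]$ satisfy the hypotheses of Theorem \ref{Th} in $R[x]$. Each $P_j[x]$ is prime, since $R[x]/P_j[x]\cong (R/P_j)[x]$ is a domain. The $P_j[x]$ are pairwise comaximal, since $1\in P_i+P_j\subseteq P_i[x]+P_j[x]$. Moreover $\sqrt{P_j[x]}=P_j[x]$, so $\sqrt{I[x]}$ is an intersection of comaximal primes that equals its own radical. Therefore Theorem \ref{Th}, applied to the ideal $\bigcap_j P_j[x]$, says that $I[x]$ is quasi sdf-absorbing iff at most one factor $R[x]/P_j[x]$ has characteristic different from $2$. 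The same theorem, applied in $R$ to $\sqrt I=\bigcap_j P_j$ (whose radical is itself), says that $I$ is quasi sdf-absorbing iff $\sqrt I$ is sdf-absorbing iff at most one $R/P_j$ has characteristic different from $2$. Here I use that $I$ and $\sqrt I$ have the same radical.

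The final step is to compare characteristics. We have $\operatorname{char}\big((R/P_j)[x]\big)=\operatorname{char}(R/P_j)$, because $R/P_j$ embeds in $(R/P_j)[x]$ as the constants and both rings share the same identity. So the two criteria coincide, which gives (1)$\Leftrightarrow$(2). Properness of $I[x]$ is automatic, since $I[x]\cap R=I$.

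The only step with real content is the equality $\sqrt{I[x]}=\sqrt{I}[x]$. It rests on the fact that a polynomial over a commutative ring is nilpotent iff all its coefficients are nilpotent. One direction is clear because the nilradical is an ideal. For the other, if $f^k=0$ then the leading coefficient $a_d$ satisfies $a_d^k=0$, and one inducts on $\deg f$ after subtracting the nilpotent term $a_dx^d$. Everything else is a direct invocation of Theorem \ref{Th}.
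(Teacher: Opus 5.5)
Your proof is correct, but it obtains the key transfer step by a different route from the paper.

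Both arguments start from $\sqrt{I[x]}=\sqrt{I}[x]$. The paper simply asserts this, whereas you justify it via the description of nilpotent polynomials. After that step the paper treats the equivalence ``$\sqrt{I}$ is sdf-absorbing in $R$ $\Leftrightarrow$ $\sqrt{I}[x]$ is sdf-absorbing in $R[x]$'' as a black box. It cites \cite[Theorem 4.5]{AB2024} for (1)$\Rightarrow$(2) and says the converse follows ``by the same argument''.

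You instead derive that equivalence from Theorem \ref{Th}, which is already in the paper:
\begin{enumerate}
\item You check that the $P_j[x]$ are pairwise comaximal primes with $\bigcap_j P_j[x]=\sqrt{I}[x]$.
\item You apply the characteristic criterion once in $R$ and once in $R[x]$.
\item You observe that $\mathrm{char}(R[x]/P_j[x])=\mathrm{char}(R/P_j)$, so the two criteria are literally the same condition.
\end{enumerate}

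The paper's version is shorter. Yours is self-contained relative to results already in the paper and proves both directions at once. It also makes explicit where the comaximality hypothesis is used, namely to make Theorem \ref{Th} applicable on both sides.

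One small addition: (2)$\Rightarrow$(1) needs no hypothesis at all. Since $I[x]\cap R=I$, and quasi sdf-absorbing ideals contract to quasi sdf-absorbing ideals along $R\subseteq R[x]$ (the corollary following Theorem \ref{T}), $I$ is quasi sdf-absorbing whenever $I[x]$ is.
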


\begin{proof}
Let $I$ be a quasi sdf-absorbing ideal of $R.$ Then $\sqrt{I}$ is a
sdf-absorbing ideal. From \cite[Theorem 4.5]{AB2024}, $\sqrt{I}[X]$ is a
sdf-absorbing ideal of $R[X]$. Since $\sqrt{I[X]}=\sqrt{I}[X]$ is
sdf-absorbing, we have that $I[X]$ is quasi sdf-absorbing in $R[X].$ The
converse is clear by using the same argument.
\end{proof}

\begin{theorem}
Let $R$ be a ring and $I$ a proper ideal of $R$. Then $(I,X)$ is a quasi
sdf-absorbing ideal of $R[X]$ if and only if $I$ is a quasi sdf- absorbing
ideal of $R$. In particular, $(X)$ is a quasi sdf- absorbing ideal of $R[X]$
if and only if $\sqrt{0}$ is a quasi sdf-absorbing ideal of $R.$
\end{theorem}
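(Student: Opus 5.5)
The plan is to reduce everything to the quotient ring and to the computation of the radical of $(I,X)$. First I compute $\sqrt{(I,X)}=(\sqrt{I},X)$ in $R[X]$. Indeed, $(\sqrt{I},X)$ is the kernel of the composite surjection $R[X]\to R\to R/\sqrt{I}$, so $R[X]/(\sqrt{I},X)\cong R/\sqrt{I}$ is reduced. Hence $(\sqrt{I},X)$ is a radical ideal containing $(I,X)$. Conversely, $(\sqrt{I},X)\subseteq\sqrt{(I,X)}$: if $a^{n}\in I$, then $(a+Xg)^{n}\in(I,X)$, since expanding gives $a^{n}$ plus terms divisible by $X$. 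Thus $\sqrt{(I,X)}=(\sqrt{I},X)$.

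Next I use the evaluation homomorphism $\varphi:R[X]\to R$, $f\mapsto f(0)$. It is surjective with kernel $(X)$, and $\varphi^{-1}(\sqrt{I})=(\sqrt{I},X)$. The key point is that an ideal $J\supseteq\ker\varphi$ satisfies the condition of Remark \ref{rr}(3) in $R[X]$ if and only if $\varphi(J)$ satisfies it in $R$. I will prove this directly rather than rely on a surjective-image result stated later.

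($\Leftarrow$) Assume $\sqrt{I}$ is sdf-absorbing in $R$, and let $f,g\in R[X]$ with $f^{2}-g^{2}\in(\sqrt{I},X)$. Applying $\varphi$ gives $f(0)^{2}-g(0)^{2}\in\sqrt{I}$. By Remark \ref{rr}(3), which handles zero elements so no nonzeroness issue arises, $f(0)-g(0)\in\sqrt{I}$ or $f(0)+g(0)\in\sqrt{I}$. Hence $f-g$ or $f+g$ lies in $\varphi^{-1}(\sqrt{I})=(\sqrt{I},X)$.

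($\Rightarrow$) Assume $(\sqrt{I},X)$ is sdf-absorbing, and let $a,b\in R$ with $a^{2}-b^{2}\in\sqrt{I}$. Viewing $a,b$ as constants, $a^{2}-b^{2}\in(\sqrt{I},X)$. By Remark \ref{rr}(3) applied in $R[X]$, $a\pm b\in(\sqrt{I},X)\cap R=\sqrt{I}$. This intersection holds because the constant term of any element of $(\sqrt{I},X)$ lies in $\sqrt{I}$. Properness is preserved in both directions since $(I,X)\cap R=I$.

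For the particular case, take $I=0$. Then $(0,X)=(X)$, and $0$ is quasi sdf-absorbing exactly when $\sqrt{0}$ is sdf-absorbing. Since $\sqrt{\sqrt{0}}=\sqrt{0}$, this is the same as $\sqrt{0}$ being quasi sdf-absorbing. So the statement reads as claimed, provided $R\neq 0$ so that these ideals are proper.

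The main obstacle is the careful use of Remark \ref{rr}. The definition of sdf-absorbing only quantifies over nonzero elements, and $f(0)$ or $g(0)$ may be zero even when $f,g\neq0$. Passing to the version (3) with arbitrary elements, valid precisely because the ideals involved are radical, removes this difficulty. Apart from that point, the radical computation is the only step requiring verification.
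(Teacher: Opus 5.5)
Your proof is correct and follows essentially the same route as the paper: identify $\sqrt{(I,X)}=(\sqrt{I},X)$ and pass between $R[X]$ and $R$ via constant terms (evaluation at $0$) in both directions. Beyond the paper, you verify the radical formula, which the paper only asserts, and you handle possibly zero constant terms explicitly through Remark~\ref{rr}(3), which the paper uses only implicitly.
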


\begin{proof}
($\Rightarrow$) Assume that $(I,X)$ is a quasi sdf-absorbing ideal of $R[X]$.
Let $a,b\in R$ such that $a^{2}-b^{2}\in\sqrt{I}.$ Then $a^{2}-b^{2}\in
\sqrt{(I,X)}$, since $\sqrt{I}\subseteq(\sqrt{I},X)=\sqrt{(I,X)}$. Viewing
$a,b$ as constant polynomials, we get: $a+b\in\sqrt{(I,X)}\ \text{or}%
\ a-b\in\sqrt{(I,X)}.$ Hence: $a+b\in\sqrt{I}\ \text{or}\ a-b\in\sqrt{I}.$
Thus $I$ is quasi sdf-absorbing.

($\Leftarrow$) Assume that $I$ is quasi sdf-absorbing. Let $f,g\in R[X]$ such
that $f^{2}-g^{2}\in\sqrt{(I,X)}=(\sqrt{I},X).$ Write: $f=a+Xf_{1}%
,\ g=b+Xg_{1}.$ Then the constant term of $f^{2}-g^{2}$ is: $a^{2}-b^{2}%
\in\sqrt{I}.$ By hypothesis: $a+b\in\sqrt{I}\ \text{or}\ a-b\in\sqrt{I}.$
Hence: $f+g\in(\sqrt{I},X)=\sqrt{(I,X)}$ or $f-g\in(\sqrt{I},X)=\sqrt{(I,X)}.$
Therefore $(I,X)$ is quasi sdf-absorbing.
\end{proof}

\section{Behavior of Quasi sdf-absorbing ideals under Ring Constructions}

In this section, we examine the behavior of quasi sdf-absorbing ideals with
respect to several fundamental ring constructions, including ring
homomorphisms, localizations, quotient rings, cartesian products,
idealizations, and amalgamated rings.

\begin{theorem}
\label{T} Let $\varphi:R\rightarrow S$ be a ring homomorphism.
\end{theorem}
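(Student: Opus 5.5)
Since the statement breaks off after its opening line, I will plan for the two assertions that such a theorem usually contains. First, if $J$ is a quasi sdf-absorbing ideal of $S$, then $\varphi^{-1}(J)$ is a quasi sdf-absorbing ideal of $R$. Second, if $\varphi$ is surjective and $I$ is a quasi sdf-absorbing ideal of $R$ with $\operatorname{Ker}\varphi\subseteq I$, then $\varphi(I)$ is a quasi sdf-absorbing ideal of $S$. In both parts I work with the equivalent form (3) of Remark \ref{rr}, which drops the nonzero hypothesis on $a,b$. This matters here because a nonzero element of $R$ can map to $0$ in $S$, and conversely a nonzero element of $S$ can have preimages of either kind. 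With (3) this issue disappears, and I expect injectivity will not be needed for the first part.

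For the preimage, I first note that $\varphi^{-1}(J)$ is proper, since $\varphi(1)=1\notin J$. Next I record the identity $\sqrt{\varphi^{-1}(J)}=\varphi^{-1}(\sqrt{J})$. This holds because $x^{n}\in\varphi^{-1}(J)$ if and only if $\varphi(x)^{n}\in J$. Now take $a,b\in R$ with $a^{2}-b^{2}\in\sqrt{\varphi^{-1}(J)}$. Then $\varphi(a)^{2}-\varphi(b)^{2}\in\sqrt{J}$, and Remark \ref{rr}(3) applied in $S$ gives $\varphi(a)\pm\varphi(b)\in\sqrt{J}$ for one choice of sign. Hence $a\pm b\in\varphi^{-1}(\sqrt{J})=\sqrt{\varphi^{-1}(J)}$, and Remark \ref{rr} again shows that $\varphi^{-1}(J)$ is quasi sdf-absorbing.

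For the image, $\varphi(I)$ is an ideal because $\varphi$ is onto. It is proper because $\operatorname{Ker}\varphi\subseteq I$ and $1\notin I$. The key step, which I expect to be the only point needing care, is the identity $\sqrt{\varphi(I)}=\varphi(\sqrt{I})$. For the inclusion $\supseteq$: if $x^{n}\in I$ then $\varphi(x)^{n}\in\varphi(I)$. For $\subseteq$: take $y=\varphi(x)$ with $y^{n}=\varphi(i)$ for some $i\in I$. Then $x^{n}-i\in\operatorname{Ker}\varphi\subseteq I$, so $x^{n}\in I$ and $y\in\varphi(\sqrt{I})$. Here the hypothesis $\operatorname{Ker}\varphi\subseteq I$ is essential. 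Now take $c,d\in S$ with $c^{2}-d^{2}\in\sqrt{\varphi(I)}$. Lift them to $c=\varphi(a)$ and $d=\varphi(b)$. Then $\varphi(a^{2}-b^{2})\in\varphi(\sqrt{I})$, and since $\operatorname{Ker}\varphi\subseteq I\subseteq\sqrt{I}$ this gives $a^{2}-b^{2}\in\sqrt{I}$. Remark \ref{rr}(3) in $R$ yields $a\pm b\in\sqrt{I}$, hence $c\pm d\in\varphi(\sqrt{I})=\sqrt{\varphi(I)}$.

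If the statement also includes the converse for surjections, or the quotient-ring corollary, the same radical identities give it directly. The corollary says that, for $K\subseteq I$, $I/K$ is quasi sdf-absorbing in $R/K$ if and only if $I$ is in $R$. One direction is the image part applied to the canonical map $\pi:R\to R/K$. The other is the preimage part, using $\pi^{-1}(I/K)=I$.
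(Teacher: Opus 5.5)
Your argument is essentially the paper's. Both parts go through the radical identities $\sqrt{\varphi^{-1}(J)}=\varphi^{-1}(\sqrt{J})$ and $\sqrt{\varphi(I)}=\varphi(\sqrt{I})$, then apply the sdf-absorbing condition to the radical. Your appeal to Remark \ref{rr}(3) replaces the paper's explicit handling of the cases $\varphi(a)=0$ or $\varphi(b)=0$. You also actually verify the image identity, which the paper only asserts.

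The one discrepancy is in the image part. The paper assumes only $\ker\varphi\subseteq\sqrt{I}$, not $\ker\varphi\subseteq I$, so as written you prove part (1) under a stronger hypothesis. Your claim that $\ker\varphi\subseteq I$ is essential is false, and your proof adapts with two small changes:
\begin{itemize}
\item From $x^{n}-i\in\ker\varphi\subseteq\sqrt{I}$ and $i\in I\subseteq\sqrt{I}$ you get $x^{n}\in\sqrt{I}$, hence $x\in\sqrt{I}$. This still gives $\sqrt{\varphi(I)}=\varphi(\sqrt{I})$.
\item For properness, $\varphi(i)=1$ with $i\in I$ would give $1-i\in\sqrt{I}$, so $1\in\sqrt{I}$ and $I=R$.
\end{itemize}
Your final lifting step already used only $\ker\varphi\subseteq\sqrt{I}$.
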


\begin{enumerate}
\item Let $\varphi$ be surjective and $I$ a quasi sdf-absorbing ideal of $R$.
If $\ker\varphi\subseteq\sqrt{I}$, then $\varphi(I)$ is a quasi sdf-absorbing
ideal of $S$.

\item If $J$ is a quasi-sdf-absorbing ideal of $S$, then $\varphi^{-1}(J)$ is
a quasi sdf-absorbing ideal of $R$.
\end{enumerate}

\begin{proof}
(1) Let $0\neq s,t\in S$ such that $s^{2}-t^{2}\in\sqrt{\varphi(I)}$. Since
$\varphi$ is surjective, there exist nonzero $a,b\in R$ such that
$\varphi(a)=s$ and $\varphi(b)=t$. Then $\varphi(a^{2}-b^{2})\in\varphi
(\sqrt{I}),$ and hence there exists some $r\in\sqrt{I}$ such that
$\varphi(a^{2}-b^{2})=\varphi(r)$. This is equivalent to: $a^{2}-b^{2}%
-r\in\ker\varphi.$ By the hypothesis $\ker\varphi\subseteq\sqrt{I}$, we have
$a^{2}-b^{2}\in\sqrt{I}$. Since $I$ is quasi sdf-absorbing, $\sqrt{I}$ is an
sdf-absorbing ideal and $a,b\neq0$, which imply that $a-b\in\sqrt{I}$ or
$a+b\in\sqrt{I}$. Thus, we have: $s-t=\varphi(a)-\varphi(b)\in\varphi(\sqrt
{I})=\sqrt{\varphi(I)}\ \text{or}\ s+t=\varphi(a)+\varphi(b)\in\varphi
(\sqrt{I})=\sqrt{\varphi(I)}.$ Hence $s-t\in\sqrt{\varphi(I)}$ or $s+t\in
\sqrt{\varphi(I)}$. Therefore, $\varphi(I)$ is a quasi sdf-absorbing ideal of
$S$.

(2) We check that $\sqrt{\varphi^{-1}(J)}$ is an sdf-absorbing ideal of $R$.
We use the standard property of ring homomorphisms: $\sqrt{\varphi^{-1}%
(J)}=\varphi^{-1}(\sqrt{J}).$ Let $a,b\in R$ such that $a^{2}-b^{2}\in
\varphi^{-1}(\sqrt{J})$. This implies that: $\varphi(a^{2}-b^{2}%
)=\varphi(a)^{2}-\varphi(b)^{2}\in\sqrt{J}.$ If $\varphi(a)=0$ then
$\varphi(b)^{2}\in\sqrt{J},$ and so $\varphi(b)\in\sqrt{J}$ implies
$\varphi(a)+\varphi(b)=\varphi(a+b)\in\sqrt{J}$. Thus $a+b\in\varphi
^{-1}(\sqrt{J})=\sqrt{\varphi^{-1}(J)}.$ Similarly $\varphi(b)=0$, causes
$a+b\in\sqrt{\varphi^{-1}(J)}.$ So we can suppose $\varphi(a)\neq0\neq
\varphi(b).$ Since $\sqrt{J}$ is an sdf-absorbing ideal, we obtain:
$\varphi(a)-\varphi(b)\in\sqrt{J}\ \text{or}\ \varphi(a)+\varphi(b)\in\sqrt
{J},$ whence $a-b\in\varphi^{-1}(\sqrt{J})$ or $a+b\in\varphi^{-1}(\sqrt{J})$.
It follows that $\sqrt{\varphi^{-1}(J)}$ is an sdf-absorbing ideal, that is
$\varphi^{-1}(J)$ is a quasi sdf-absorbing ideal of $R$.
\end{proof}

As a consequence of Theorem \ref{T}, we have the following result.

\begin{corollary}
Let $R$ be a ring and $I$ a proper ideal of $R.$
\end{corollary}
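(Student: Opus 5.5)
The statement of this corollary lists consequences of Theorem \ref{T}. I expect them to be: (i) for an ideal $J\subseteq I$, $I$ is quasi sdf-absorbing in $R$ if and only if $I/J$ is quasi sdf-absorbing in $R/J$; and (ii) for a subring $S$ of $R$ with $I\cap S\neq S$, if $I$ is quasi sdf-absorbing in $R$, then $I\cap S$ is quasi sdf-absorbing in $S$. My plan is to obtain each item by choosing a suitable ring homomorphism and applying the relevant part of Theorem \ref{T}. Only the backward direction of (i) needs an additional short argument.

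For the forward direction of (i), I take the canonical surjection $\pi:R\rightarrow R/J$. Its kernel is $J\subseteq I\subseteq\sqrt{I}$, so Theorem \ref{T}(1) applies directly and gives that $\pi(I)=I/J$ is quasi sdf-absorbing. For the converse, I note that $\pi^{-1}(I/J)=I+J=I$, since $J\subseteq I$. Theorem \ref{T}(2) then yields that $I$ is quasi sdf-absorbing in $R$. Along the way I will check the identity $\sqrt{I/J}=\sqrt{I}/J$, which follows because $(x+J)^n\in I/J$ if and only if $x^n\in I$ when $J\subseteq I$. This also ensures that properness of $I$ and of $I/J$ correspond.

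For (ii), I take the inclusion map $\iota:S\hookrightarrow R$. Then $\iota^{-1}(I)=I\cap S$, and the hypothesis that $I\cap S$ is proper in $S$ makes it a legitimate candidate. Theorem \ref{T}(2) immediately gives that $I\cap S$ is quasi sdf-absorbing in $S$. If the corollary also has a localization item (for a multiplicative set $M$ with $I\cap M=\emptyset$), I would handle it the same way for the contraction direction, using $\iota:R\rightarrow M^{-1}R$ and Theorem \ref{T}(2). The extension direction, namely that $M^{-1}I$ is quasi sdf-absorbing, I would prove by a direct computation. Suppose $(a/s)^2-(b/t)^2\in M^{-1}\sqrt{I}=\sqrt{M^{-1}I}$. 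Clearing denominators gives $u\bigl((at)^2-(bs)^2\bigr)\in\sqrt{I}$ for some $u\in M$. Multiplying by $u$ once more, I get $(uat)^2-(ubs)^2\in\sqrt{I}$. Remark \ref{rr}(3) then puts $u(at\pm bs)$ in $\sqrt{I}$, and dividing by $ust$ finishes.

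The only step needing care, and the one I expect to be the main obstacle, is identifying radicals correctly under each map: $\sqrt{I/J}=\sqrt{I}/J$ and $\sqrt{M^{-1}I}=M^{-1}\sqrt{I}$. I also need to handle the zero elements excluded in the definition, which I avoid by working with the equivalent form in Remark \ref{rr}(3) throughout. Once these are in place, every item reduces to a one-line invocation of Theorem \ref{T}.
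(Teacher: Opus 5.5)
Your plan is correct and matches what the paper intends. The paper gives no written proof and simply calls the corollary a consequence of Theorem \ref{T}. Its two items are the ring-extension contraction (Theorem \ref{T}(2) applied to the inclusion map) and the quotient equivalence (Theorem \ref{T}(1) for $\pi:R\rightarrow R/J$ with $\ker\pi=J\subseteq I\subseteq\sqrt{I}$, and Theorem \ref{T}(2) with $\pi^{-1}(I/J)=I$ for the converse). Your extra hypothesis that the contracted ideal be proper is automatic, since the subring shares the identity and $1\notin I$; the localization item you anticipated is not part of this corollary, as the paper treats localization in a separate proposition.
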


\begin{enumerate}
\item Let $R\subseteq S$ be a ring extension. If $I$ is a quasi sdf-absorbing
ideal of $S$, then $I\cap R$ is a quasi sdf-absorbing ideal of $R$.

\item Let $J$ be an ideal of $R$ contained in $I.$ Then $I$ is a quasi
sdf-absorbing ideal of $R$ if and only if $I/J$ is a quasi sdf-absorbing ideal
of $R/J$.
\end{enumerate}

\begin{proposition}
Let $R$ be a ring, $S\subseteq R$ a multiplicative set, and $I$ a
quasi-sdf-absorbing ideal of $R$, such that $I\cap S=\emptyset.$ Then the
localized ideal $S^{-1}I$ is quasi sdf-absorbing in $S^{-1}R$.
\end{proposition}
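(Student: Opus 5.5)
We argue directly from the characterization in Remark \ref{rr}(3), working with the localized radical. The first step is the standard identity $\sqrt{S^{-1}I}=S^{-1}\sqrt{I}$. Since $I\cap S=\emptyset$, we also have $\sqrt{I}\cap S=\emptyset$: if $s\in S\cap\sqrt{I}$ then $s^{k}\in I\cap S$ for some $k$. Hence $S^{-1}I$ is a proper ideal of $S^{-1}R$. It therefore suffices to show that for all $x,y\in S^{-1}R$ with $x^{2}-y^{2}\in S^{-1}\sqrt{I}$, we have $x-y\in S^{-1}\sqrt{I}$ or $x+y\in S^{-1}\sqrt{I}$. By Remark \ref{rr}, applied to $S^{-1}I$ in $S^{-1}R$, we need not worry about nonzero hypotheses.

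Next, write $x=a/s$ and $y=b/t$, and bring both over the common denominator $st$: $x=at/(st)$ and $y=bs/(st)$. Put $a'=at$, $b'=bs$ and $u=st\in S$. Then
\[
x^{2}-y^{2}=\frac{a'^{2}-b'^{2}}{u^{2}}\in S^{-1}\sqrt{I}.
\]
So there exist $r\in\sqrt{I}$, $v\in S$ and $w\in S$ with $w\bigl(v(a'^{2}-b'^{2})-u^{2}r\bigr)=0$. Consequently $wv(a'^{2}-b'^{2})\in\sqrt{I}$.

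The main obstacle is removing the factor $wv\in S$. Here a prime-avoidance argument is not available, because $\sqrt{I}$ need not be prime. I would absorb the factor into the squares instead. Put $c=wv\in S$. Then $c^{2}(a'^{2}-b'^{2})=c\cdot c(a'^{2}-b'^{2})\in\sqrt{I}$, so
\[
(ca')^{2}-(cb')^{2}\in\sqrt{I}.
\]
Since $I$ is quasi sdf-absorbing, Remark \ref{rr}(3) gives $c(a'-b')\in\sqrt{I}$ or $c(a'+b')\in\sqrt{I}$.

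Finally, divide out. In the first case,
\[
x-y=\frac{a'-b'}{u}=\frac{c(a'-b')}{cu}\in S^{-1}\sqrt{I}=\sqrt{S^{-1}I},
\]
and the second case gives $x+y\in\sqrt{S^{-1}I}$ in the same way. Thus $\sqrt{S^{-1}I}$ is sdf-absorbing, so $S^{-1}I$ is quasi sdf-absorbing. The only points needing care are the radical–localization identity and the properness check, both of which are routine.
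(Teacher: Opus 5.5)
Your proof is correct, and it takes a different route from the paper at the key step. You share the paper's framing: $\sqrt{I}\cap S=\emptyset$ (which gives properness) and $\sqrt{S^{-1}I}=S^{-1}\sqrt{I}$. But the paper simply applies \cite[Theorem 2.9]{AB2024} to the sdf-absorbing ideal $\sqrt{I}$ (localizations of sdf-absorbing ideals are sdf-absorbing), whereas you verify the sdf-absorbing property of $S^{-1}\sqrt{I}$ element by element.

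Your key move is to absorb the stray factor $c=wv\in S$ into the squares by passing to $(ca')^{2}-(cb')^{2}\in\sqrt{I}$, rather than trying to cancel $c$, which would require primality. That is exactly right. Your use of Remark \ref{rr}(3) keeps it clean: because $\sqrt{I}$ is radical, the criterion holds for all pairs, not just nonzero ones. So you never need to check whether $ca'$ or $cb'$ vanishes, which can happen when $S$ contains zero divisors. An appeal to the nonzero-element definition would force you to treat that case separately. You also handle the witness $w$ in the equality of fractions correctly, and it is genuinely needed in that situation.

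What each approach buys: the paper's proof is a two-line reduction that leans on the existing theory of sdf-absorbing ideals. Yours is self-contained and does not depend on the external citation; in effect it reproves the cited localization result in the special case of radical ideals, which is all that is needed here.
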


\begin{proof}
First, note that $\sqrt{I}\cap S=\emptyset$. Indeed, if there exists
$s\in\sqrt{I}\cap S$, then $s^{n}\in I\cap S$ for some positive integer $n$, a
contradiction. Now, suppose that $I$ is a quasi sdf-absorbing ideal of $R.$
Then $\sqrt{I}$ is sdf-absorbing,$\ $hence $S^{-1}\sqrt{I}=\sqrt{S^{-1}I}$ is
sdf-absorbing ideal of $S^{-1}R$ by \cite[Theorem 2.9]{AB2024}. Thus,
$S^{-1}I$ is a quasi sdf-absorbing ideal of $S^{-1}R$.
\end{proof}

An ideal that is not quasi sdf-absorbing in $R$ may have a quasi sdf-absorbing
localization in $S^{-1}R$.

\begin{example}
Consider the ideal $I=15\mathbb{Z}$ in $R=\mathbb{Z}$ and the multiplicative
set $S=\{5^{n}\mid n\geq0\}$. First, we show that $I$ is not quasi
sdf-absorbing in $\mathbb{Z}$. Since $I$ is a radical ideal, $\sqrt
{I}=15\mathbb{Z}$. Let $a=4$ and $b=1$. Then: $a^{2}-b^{2}=16-1=15\in
15\mathbb{Z},$ but $a-b=3\notin15\mathbb{Z}$ and $a+b=5\notin15\mathbb{Z}$.
Thus, $\sqrt{I}$ is not sdf-absorbing. Now, consider the localized ideal
$S^{-1}I$ in $S^{-1}\mathbb{Z}$. Since $5\in S$ is a unit in the localized
ring, we have: $S^{-1}(15\mathbb{Z})=S^{-1}(3\mathbb{Z}).$ The radical of this
ideal is $\sqrt{S^{-1}I}=S^{-1}(3\mathbb{Z})$, which is a maximal (and thus
prime) ideal in $S^{-1}\mathbb{Z}$. Since every prime ideal is sdf-absorbing,
it follows that $S^{-1}I$ is a quasi-sdf-absorbing ideal in $S^{-1}\mathbb{Z}$.
\end{example}

Let $I$ be an ideal of a ring $R$ and $a\in R\backslash I$. Then the inclusion
$\sqrt{(I:a)}\subseteq(\sqrt{I}:a)$ always holds. But, the reverse inclusion
$(\sqrt{I}:a)\subseteq\sqrt{(I:a)}$ does not hold in general. Let
$R=\mathbb{Z}$, $I=(4)$, and $a=2$. Then $(I:a)=\{x\in\mathbb{Z}\mid
2x\in(4)\}=(2),$ $\sqrt{(I:a)}=(2)$. On the other hand, since $\sqrt{I}%
=\sqrt{(4)}=(2)$ we have $(\sqrt{I}:a)=((2):2)=\mathbb{Z}.$ This shows that
the inclusion $(\sqrt{I}:a)\subseteq\sqrt{(I:a)}$ does not hold in general. If
$a$ is an idempotent element of $R$ (i.e., $a^{2}=a$), then we have the
equality $\sqrt{(I:a)}=(\sqrt{I}:a)$. Indeed, let $x\in(\sqrt{I}:a)$, so
$xa\in\sqrt{I}$. Then $(xa)^{n}\in I$ for some $n\geq1$ whence $x^{n}\in
(I:a)$, hence $x\in\sqrt{(I:a)}$. In particular, if $R$ is a boolean ring,
then the equality always holds for all $a\in R$.

\begin{proposition}
Let $I$ be a quasi sdf-absorbing ideal of a ring $R$, such that $\sqrt
{(I:a)}=(\sqrt{I}:a).$ Then, for any $a\in R\setminus I$, the colon ideal
$(I:a)=\{r\in R\mid ra\in I\}$ is also quasi sdf-absorbing.
\end{proposition}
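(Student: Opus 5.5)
The plan is to verify directly that $\sqrt{(I:a)}$ is sdf-absorbing. The hypothesis $\sqrt{(I:a)}=(\sqrt{I}:a)$ lets us replace the radical of the colon by the colon of the radical. Since $\sqrt{I}$ is sdf-absorbing, we can then transport the absorbing condition through multiplication by $a$.

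First, $(I:a)$ is a proper ideal: if $1\in(I:a)$ then $a\in I$, contrary to $a\in R\setminus I$. By the hypothesis it suffices to show that $(\sqrt{I}:a)$ is sdf-absorbing. I will use the equivalent form in Remark \ref{rr}(3), which drops the nonzero requirement and so avoids checking whether $ax$ or $ay$ vanish.

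Let $x,y\in R$ with $x^{2}-y^{2}\in(\sqrt{I}:a)$, i.e.\ $a(x^{2}-y^{2})\in\sqrt{I}$. Multiplying by $a$ gives
\[
(ax)^{2}-(ay)^{2}=a\cdot a(x^{2}-y^{2})\in\sqrt{I}.
\]
Since $I$ is quasi sdf-absorbing, Remark \ref{rr}(3), applied to the elements $ax,ay$ of $R$, yields $ax-ay\in\sqrt{I}$ or $ax+ay\in\sqrt{I}$. Hence $a(x-y)\in\sqrt{I}$ or $a(x+y)\in\sqrt{I}$, that is, $x-y\in(\sqrt{I}:a)=\sqrt{(I:a)}$ or $x+y\in\sqrt{(I:a)}$. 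By Remark \ref{rr}, applied to the ideal $(I:a)$ (the implication (3)$\Rightarrow$(1)), $(I:a)$ is quasi sdf-absorbing.

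The only delicate points are the properness of $(I:a)$ and the use of $\sqrt{(I:a)}=(\sqrt{I}:a)$. Without this equality the argument only places $x\pm y$ in $(\sqrt{I}:a)$, which may be strictly larger than $\sqrt{(I:a)}$, as the example with $I=(4)$ and $a=2$ in $\mathbb{Z}$ shows.
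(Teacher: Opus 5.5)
Your proposal is correct and follows the paper's proof essentially step by step. Like the paper, you multiply by $a$ to get $(ax)^{2}-(ay)^{2}\in\sqrt{I}$, apply Remark \ref{rr}(3) to $ax,ay$, and then use the hypothesis $(\sqrt{I}:a)=\sqrt{(I:a)}$ to place $x\pm y$ in $\sqrt{(I:a)}$. Your explicit check that $(I:a)$ is proper (because $a\notin I$) is a small detail the paper leaves implicit.
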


\begin{proof}
Suppose that $I$ is a quasi sdf-absorbing ideal of $R$. Let $r,s\in R$ such
that $r^{2}-s^{2}\in\sqrt{(I:a)}$, which means that $(r^{2}-s^{2})a\in\sqrt
{I}.$ Hence, $(ra)^{2}-(sa)^{2}=(r^{2}-s^{2})a^{2}\in\sqrt{I}$ which implies
$ra-sa\in\sqrt{I}\ \text{or}\ ra+sa\in\sqrt{I}$ by Remark \ref{rr}. Then
$r-s\in(\sqrt{I}:a)=\sqrt{(I:a)}\ \text{or}\ r+s\in(\sqrt{I}:a)=\sqrt{(I:a)}.$
Thus, $(I:a)$ is quasi sdf-absorbing.by again Remark \ref{rr}.
\end{proof}

\begin{proposition}
Let $I$ and $J$ be quasi-sdf-absorbing ideals of a ring $R$. If $\sqrt
{I}=\sqrt{J}$, then $I\cap J$ is quasi-sdf-absorbing.
\end{proposition}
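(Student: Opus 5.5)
The argument reduces to the standard identity $\sqrt{I\cap J}=\sqrt{I}\cap\sqrt{J}$. Once this is available, the hypothesis $\sqrt{I}=\sqrt{J}$ gives $\sqrt{I\cap J}=\sqrt{I}$. The latter is sdf-absorbing because $I$ is quasi sdf-absorbing, so $I\cap J$ is quasi sdf-absorbing by definition. One should also note that $I\cap J$ is proper, since it is contained in the proper ideal $I$; this is needed because the definition is stated for proper ideals.

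The steps are as follows.

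First, we prove $\sqrt{I}\cap\sqrt{J}\subseteq\sqrt{I\cap J}$. Take $x$ with $x^{m}\in I$ and $x^{n}\in J$. Then $x^{m+n}\in I\cap J$, because ideals absorb multiplication.

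Second, the reverse inclusion $\sqrt{I\cap J}\subseteq\sqrt{I}\cap\sqrt{J}$ is immediate from monotonicity of the radical.

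Third, we substitute $\sqrt{J}=\sqrt{I}$ to get $\sqrt{I\cap J}=\sqrt{I}$.

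Finally, we invoke the definition of quasi sdf-absorbing directly. If one prefers, Remark \ref{rr} can be applied elementwise instead: given $a^{2}-b^{2}\in\sqrt{I\cap J}=\sqrt{I}$, we conclude $a-b\in\sqrt{I}$ or $a+b\in\sqrt{I}$, and $\sqrt{I}=\sqrt{I\cap J}$.

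There is no real obstacle; the only point requiring care is the radical-of-intersection identity, which is routine. Note that the hypothesis that $J$ is quasi sdf-absorbing is redundant given $\sqrt{I}=\sqrt{J}$.
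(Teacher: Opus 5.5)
Your proposal is correct and matches the paper's proof: both rest on $\sqrt{I\cap J}=\sqrt{I}\cap\sqrt{J}=\sqrt{I}$ together with the fact that $\sqrt{I}$ is sdf-absorbing. Your explicit check of the radical-of-intersection identity and of the properness of $I\cap J$ simply fills in details the paper leaves implicit.
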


\begin{proof}
We have $\sqrt{I\cap J}=\sqrt{I}\cap\sqrt{J}=\sqrt{I}$. By hypothesis,
$\sqrt{I}$ is an sdf-absorbing ideal, hence $\sqrt{I\cap J}$ is sdf-absorbing.
Therefore, $I\cap J$ is a quasi sdf-absorbing ideal of $R$.
\end{proof}

In analogy with \cite[Theorem 4.12]{AB2024} we develop parallel results for
quasi sdf-absorbing ideals.

\begin{theorem}
Let $I_{1},$ $I_{2}$ be proper ideals of $R_{1},$ $R_{2}$, respectively. Then
$I_{1}\times I_{2}$ is a quasi sdf-absorbing ideal of $R_{1}\times R_{2}$ if
and only if $I_{1},$ $I_{2}$ are quasi sdf-absorbing ideals of $R_{1},$
$R_{2}$, respectively and $2\in\sqrt{I_{1}}$ or $2\in\sqrt{I_{2}}$.
\end{theorem}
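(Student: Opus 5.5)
The plan is to reduce everything to the radical and then either quote \cite[Theorem 4.12]{AB2024} or argue directly. The basic observation is that in $R_{1}\times R_{2}$ we have $\sqrt{I_{1}\times I_{2}}=\sqrt{I_{1}}\times\sqrt{I_{2}}$, since $(a,b)^{n}=(a^{n},b^{n})$ and we may take the larger exponent. So $I_{1}\times I_{2}$ is quasi sdf-absorbing if and only if $\sqrt{I_{1}}\times\sqrt{I_{2}}$ is an sdf-absorbing ideal of $R_{1}\times R_{2}$. Presumably \cite[Theorem 4.12]{AB2024} says that $J_{1}\times J_{2}$ is sdf-absorbing if and only if $J_{1},J_{2}$ are sdf-absorbing and $2\in J_{1}$ or $2\in J_{2}$. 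Applying this with $J_{i}=\sqrt{I_{i}}$ gives the claim at once. To be safe, I would also give a direct argument using the characterization in Remark \ref{rr}(3), which removes the nonzero restriction.

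For ($\Rightarrow$), first show that $I_{1}$ is quasi sdf-absorbing. Given $a^{2}-b^{2}\in\sqrt{I_{1}}$, consider $(a,0)$ and $(b,0)$. Their square difference lies in $\sqrt{I_{1}}\times\sqrt{I_{2}}$, so $(a\mp b,0)$ lies in the radical for one choice of sign, which gives $a\mp b\in\sqrt{I_{1}}$. The argument for $I_{2}$ is symmetric.

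Next show $2\in\sqrt{I_{1}}$ or $2\in\sqrt{I_{2}}$. Take $x=(1,1)$ and $y=(1,-1)$. Then $x^{2}-y^{2}=(0,0)$, which lies in the radical. Also $x-y=(0,2)$ and $x+y=(2,0)$. So either $2\in\sqrt{I_{2}}$ or $2\in\sqrt{I_{1}}$. This is the key test pair, and it needs no nonzero hypothesis, by Remark \ref{rr}(3).

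For ($\Leftarrow$), assume without loss of generality that $2\in\sqrt{I_{1}}$. Let $(a_{1},a_{2})^{2}-(b_{1},b_{2})^{2}\in\sqrt{I_{1}}\times\sqrt{I_{2}}$. In $R_{1}$, we get $a_{1}^{2}-b_{1}^{2}\in\sqrt{I_{1}}$, so $a_{1}+b_{1}\in\sqrt{I_{1}}$ or $a_{1}-b_{1}\in\sqrt{I_{1}}$. Since $a_{1}+b_{1}=(a_{1}-b_{1})+2b_{1}$ and $2\in\sqrt{I_{1}}$, both hold. In $R_{2}$, we get $a_{2}-b_{2}\in\sqrt{I_{2}}$ or $a_{2}+b_{2}\in\sqrt{I_{2}}$. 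Because the first coordinate works for both signs, the matching sign works in both coordinates. So $\sqrt{I_{1}\times I_{2}}$ is sdf-absorbing, using Remark \ref{rr}(3) for the arbitrary (possibly zero) elements.

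The main obstacle is handling zero elements correctly, since the definition of sdf-absorbing only quantifies over nonzero $a,b$. For example, $(a,0)$ is nonzero when $a\neq0$, and the case $a=0$ or $b=0$ is trivial. Invoking Remark \ref{rr} for both $\sqrt{I_{i}}$ and the product radical takes care of this uniformly.
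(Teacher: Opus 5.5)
Your proposal is correct and is essentially the paper's proof. You use the same identity $\sqrt{I_{1}\times I_{2}}=\sqrt{I_{1}}\times\sqrt{I_{2}}$, the same test pair $(1,1),(1,-1)$ to force $2\in\sqrt{I_{1}}$ or $2\in\sqrt{I_{2}}$, and the same coordinatewise sign-matching for the converse. Along the way you spell out the component step the paper calls straightforward, and you replace its appeal to \cite[Theorem 2.5]{AB2024} by the identity $a_{1}+b_{1}=(a_{1}-b_{1})+2b_{1}$. Let the direct argument carry the proof: your guessed form of \cite[Theorem 4.12]{AB2024} is not needed, and it can fail for non-radical ideals (e.g.\ $0\times 0$ in $\mathbb{Z}/4\mathbb{Z}\times\mathbb{Z}/2\mathbb{Z}$ with $x=(2,1)$, $y=(0,1)$), though it is harmless when applied to the radical ideals $\sqrt{I_{i}}$.
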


\begin{proof}
Let $R=R_{1}\times R_{2}$ and $I=I_{1}\times I_{2}$. We use the fact that
$\sqrt{I}=\sqrt{I_{1}}\times\sqrt{I_{2}}$. $(\implies)$ Assume $\sqrt{I}$ is
an sdf-absorbing ideal of $R$. It is straightforward to show that its
components $\sqrt{I_{1}}$ and $\sqrt{I_{2}}$ are sdf-absorbing ideals of
$R_{1}$ and $R_{2}$, respectively, hence $I_{1},$ $I_{2}$ are quasi
sdf-absorbing. To show that $2\in\sqrt{I_{1}}$ or $2\in\sqrt{I_{2}}$, let
$a=(1,1)$ and $b=(1,-1)$ in $R$. Then $a^{2}-b^{2}=(1-1,1-1)=(0,0)\in\sqrt{I}%
$. Since $\sqrt{I}$ is sdf-absorbing, we must have $a+b=(2,0)\in\sqrt{I}$ or
$a-b=(0,2)\in\sqrt{I}$. This implies $2\in\sqrt{I_{1}}$ or $2\in\sqrt{I_{2}}$.

$(\impliedby)$ Assume $I_{1},I_{2}$ are quasi sdf-absorbing and, without loss
of generality, $2\in\sqrt{I_{1}}$. Let $a=(a_{1},a_{2}),b=(b_{1},b_{2})\in R$
such that $a^{2}-b^{2}\in\sqrt{I}$. This means $a_{1}^{2}-b_{1}^{2}\in
\sqrt{I_{1}}$ and $a_{2}^{2}-b_{2}^{2}\in\sqrt{I_{2}}$. Since $\sqrt{I_{1}}$
is sdf-absorbing and $2\in\sqrt{I_{1}}$, by \cite[Theorem 2.5]{AB2024}, we
have both $a_{1}+b_{1}\in\sqrt{I_{1}}$ and $a_{1}-b_{1}\in\sqrt{I_{1}}$. At
the same time, since $I_{2}$ is quasi sdf-absorbing, we have $a_{2}+b_{2}%
\in\sqrt{I_{2}}$ or $a_{2}-b_{2}\in\sqrt{I_{2}}$. If $a_{2}+b_{2}\in
\sqrt{I_{2}}$, then $(a_{1}+b_{1},a_{2}+b_{2})=a+b\in\sqrt{I}$. If
$a_{2}-b_{2}\in\sqrt{I_{2}}$, then $(a_{1}-b_{1},a_{2}-b_{2})=a-b\in\sqrt{I}$.
In both cases, $\sqrt{I}$ satisfies the sdf-absorbing condition. Thus,
$I_{1}\times I_{2}$ is quasi sdf-absorbing.
\end{proof}

The condition $2\in\sqrt{I_{1}}$ or $2\in\sqrt{I_{2}}$ in the above Theorem is
necessary. The following example is to illustrate that if this condition is
not satisfied, the product ideal $I_{1}\times I_{2}$ fails to be quasi
sdf-absorbing even if both $I_{1}$ and $I_{2}$ are quasi sdf-absorbing.

\begin{example}
Consider $I_{1}=3\mathbb{Z}$ and $I_{2}=5\mathbb{Z}$ in the ring
$R=\mathbb{Z}\times\mathbb{Z}$. Then, $I_{1}$ and $I_{2}$ are quasi
sdf-absorbing in $\mathbb{Z}$ since they are prime. However, $2\notin
\sqrt{I_{1}}=3\mathbb{Z}$ and $2\notin\sqrt{I_{2}}=5\mathbb{Z}$. Put $x=(4,1)$
and $y=(1,4)$ in $\mathbb{Z}\times\mathbb{Z}$. We have $x^{2}-y^{2}%
=(15,-15)\in3\mathbb{Z}\times5\mathbb{Z}=\sqrt{I_{1}\times I_{2}},$ but
$x-y=(3,-3)\notin3\mathbb{Z}\times5\mathbb{Z}$, $x+y=(5,5)\notin
3\mathbb{Z}\times5\mathbb{Z}$. Thus, $I_{1}\times I_{2}$ is not a quasi
sdf-absorbing ideal.
\end{example}

Let $R$ be a commutative ring and $M$ an $R$-module. The \emph{idealization}
of $M$ in $R$ is the ring $R \ltimes M = \{ (r,m) \mid r \in R, m \in M \} $
with operations
\[
(r_{1}, m_{1}) + (r_{2}, m_{2}) = (r_{1} + r_{2}, m_{1} + m_{2}), \quad(r_{1},
m_{1})(r_{2}, m_{2}) = (r_{1} r_{2}, r_{1} m_{2} + r_{2} m_{1}).
\]

Let $I\subsetneq R$ be an ideal. Consider the ideal in the idealization
$I\ltimes M=\{(r,m)\mid r\in I,m\in M\}.$ Let $R\ltimes M$ be the idealization
of an $R$-module $M$. For any ideal $I$ of $R$, let $J=I\ltimes M$. According
to \cite[Theorem 3.2]{Anderson2009}, the radical of an ideal in the
idealization $R\ltimes M$ is given by:
\[
\sqrt{I\ltimes M}=\sqrt{I}\ltimes M
\]
where $I$ is an ideal of $R$ and $M$ is an $R$-module. This follows from the
fact that $(r,m)^{n}=(r^{n},nr^{n-1}m)$, so $(r,m)\in\sqrt{I\ltimes M}$ if and
only if $r\in\sqrt{I}$.

\begin{proposition}
Let $I$ be a proper ideal of a ring $R$ and $M$ an $R$-module. Then $I\ltimes
M$ is a quasi sdf-absorbing ideal of $R\ltimes M$ if and only if $I$ is a
quasi sdf-absorbing ideal of $R$.
\end{proposition}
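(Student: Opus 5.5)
We work throughout with the description $\sqrt{I\ltimes M}=\sqrt{I}\ltimes M$ recalled just before the statement. By Remark \ref{rr}(3), it is enough to test the sdf-absorbing condition on $\sqrt{I}$ and on $\sqrt{I}\ltimes M$ for all pairs of elements, zero or not. This removes the nonzero-element bookkeeping. We also note at the start that $I\ltimes M$ is proper exactly when $I$ is proper.

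For ($\Leftarrow$), assume $I$ is quasi sdf-absorbing. Take $x=(a,m)$ and $y=(b,n)$ in $R\ltimes M$ with $x^{2}-y^{2}\in\sqrt{I}\ltimes M$. Using the multiplication rule we compute
\[
x^{2}-y^{2}=(a^{2}-b^{2},\,2am-2bn).
\]
Membership in $\sqrt{I}\ltimes M$ therefore only says that $a^{2}-b^{2}\in\sqrt{I}$. By Remark \ref{rr}(3) applied to $I$, we get $a-b\in\sqrt{I}$ or $a+b\in\sqrt{I}$. In the first case $x-y=(a-b,m-n)\in\sqrt{I}\ltimes M$; in the second, $x+y=(a+b,m+n)\in\sqrt{I}\ltimes M$. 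Since the second coordinate is unrestricted, nothing more is needed. Hence $\sqrt{I\ltimes M}$ is sdf-absorbing.

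For ($\Rightarrow$), the cleanest route is Theorem \ref{T}(2), via the canonical ring homomorphism $\iota:R\to R\ltimes M$, $r\mapsto(r,0)$. Its preimage is $\iota^{-1}(I\ltimes M)=I$, so $I$ is quasi sdf-absorbing. As a backup, a direct check also works. Given $a^{2}-b^{2}\in\sqrt{I}$, we have $(a,0)^{2}-(b,0)^{2}=(a^{2}-b^{2},0)\in\sqrt{I}\ltimes M$. Hence $(a\pm b,0)\in\sqrt{I}\ltimes M$ for one choice of sign, which gives $a\pm b\in\sqrt{I}$. Alternatively, one could use the surjection $R\ltimes M\to R$, whose kernel $0\ltimes M$ lies in $\sqrt{I}\ltimes M$, together with Theorem \ref{T}(1).

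There is no serious obstacle. The only points needing care are the two coordinate computations: that the $M$-component of $x^{2}-y^{2}$ imposes no condition, and that $x\pm y$ automatically lands in $\sqrt{I}\ltimes M$ once its first coordinate lies in $\sqrt{I}$. Both are immediate from the form of $\sqrt{I\ltimes M}$.
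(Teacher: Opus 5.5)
Your proof is correct and essentially the paper's: the ($\Leftarrow$) direction is the same coordinate computation, and your backup direct check for ($\Rightarrow$) is exactly the paper's argument. Your version assumes $a^2-b^2\in\sqrt{I}$, which is what Remark \ref{rr}(3) requires, whereas the paper writes $a^2-b^2\in I$. Routing ($\Rightarrow$) through Theorem \ref{T}(2) with $r\mapsto(r,0)$, or through Theorem \ref{T}(1) with the projection $R\ltimes M\to R$, whose kernel $0\ltimes M$ lies in $\sqrt{I}\ltimes M$, is a valid repackaging of the same computation.
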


\begin{proof}
Suppose that $I$ is a quasi sdf-absorbing ideal of $R.$ Let $x=(a,m_{1})$ and
$y=(b,m_{2})$ be elements of $R\ltimes M$ such that $x^{2}-y^{2}\in
\sqrt{I\ltimes M}$. This implies: $(a^{2}-b^{2},2am_{1}-2bm_{2})\in\sqrt
{I}\ltimes M.$ The first component satisfies $a^{2}-b^{2}\in\sqrt{I}$. Since
$I$ is a quasi-sdf-absorbing ideal of $R$, we have $a-b\in\sqrt{I}$ or
$a+b\in\sqrt{I}$. Hence, $(a,m_{1})-(b,m_{2})=(a-b,m_{1}-m_{2})\in\sqrt
{I}\ltimes M$ or $(a,m_{1})+(b,m_{2})=(a+b,m_{1}+m_{2})\in\sqrt{I}\ltimes M$.
Conversely, suppose that $I\ltimes M$ is a quasi sdf-absorbing ideal of
$R\ltimes M.$ Let $a,b\in R$ such that $a^{2}-b^{2}\in I$. Then $(a,0)^{2}%
-(b,0)^{2}\in I\ltimes M$ which implies $(a,0)-(b,0)\in\sqrt{I\ltimes M}%
=\sqrt{I}\ltimes M$ or $(a,0)+(b,0)\in\sqrt{I\ltimes M}=\sqrt{I}\ltimes M$.
Thus, $a-b\in\sqrt{I}$ or $a+b\in\sqrt{I}$, and so $I$ is a quasi
sdf-absorbing ideal of $R$ by Remark \ref{rr}.
\end{proof}

Let $R$ and $S$ be commutative rings, $J $ a proper ideal of $S$ and $\varphi:
R \to S$ a ring homomorphism. The \emph{amalgamation} of $R$ with $S$ along
$J$ with respect to $\varphi$ is the subring of $R \times S$ defined by $R
\bowtie^{\varphi}J := \{ (r, \varphi(r) + j) \mid r \in R, j \in J \}. $

We consider the corresponding ideal in the amalgamation by
\[
I\bowtie^{\varphi}J:=\{(r,\varphi(r)+j)\mid r\in I,\text{ }j\in J\}\subseteq
R\bowtie^{\varphi}J.
\]

By \cite[Proposition 2.1 (2)]{DAnna2009}, we have the following canonical
isomorphism:
\[
\frac{R\bowtie^{\varphi}J}{I\bowtie^{\varphi}J}\cong R/I
\]
This structural property suggests that the radical of $I\bowtie^{\varphi}J$
should reflect the radical of $I$ in $R$. The radical of the ideal
$I\bowtie^{f}J$ is given by the amalgamation of the radicals \cite[Lemma
9]{AE}:
\[
\sqrt{I\bowtie^{\varphi}J}=\sqrt{I}\bowtie^{\varphi}J
\]

\begin{remark}
Let $I\subsetneq R$ be a quasi sdf-absorbing ideal. Let $X=(a,\varphi
(a)+j_{1})$ and $Y=(b,\varphi(b)+j_{2})$ be elements of $R\bowtie^{\varphi}J$
such that $X^{2}-Y^{2}\in\sqrt{I\bowtie^{\varphi}J}$. This implies that the
first component satisfies: $a^{2}-b^{2}\in\sqrt{I}.$ Since $I$ is a quasi
sdf-absorbing ideal of $R$, we have $a-b\in\sqrt{I}$ or $a+b\in\sqrt{I}$. In
the amalgamated ring, if $a-b\in\sqrt{I}$, then for any elements in the second
component, we have: $X-Y=(a-b,\varphi(a-b)+(j_{1}-j_{2}))\in\sqrt{I}%
\bowtie^{\varphi}J.$ By using the same logic, we conclude $X+Y\in\sqrt
{I}\bowtie^{\varphi}J$ if $a+b\in\sqrt{I}$. Thus, the quasi sdf-absorbing
property of $I\bowtie^{\varphi}J$ is entirely determined by the quasi
sdf-absorbing property of $I$ in $R$. We conclude that $I\bowtie^{\varphi}J$
is a quasi sdf-absorbing ideal of $R\bowtie^{\varphi}J$ whenever $I$ is quasi
sdf-absorbing in $R$.
\end{remark}

\section{On the class of rings satisfying condition $(\ast)$}

In this section, we characterize the class of rings in which the distinction
between the quasi sdf-absorbing property and the sdf-absorbing primary
property vanishes. We define this class as follows:

\begin{definition}
A ring $R$ is said to satisfy condition $(\ast)$ if every quasi sdf-absorbing
ideal of $R$ is an sdf-absorbing primary ideal.
\end{definition}

For a ring $R$ satisfying $(\ast)$, the structural relationship between an
ideal and its radical becomes particularly clear: for any proper ideal $I$ of
$R$, the following statements are equivalent:

\begin{enumerate}
\item $I$ is an sdf-absorbing primary ideal;

\item $\sqrt{I}$ is an sdf-absorbing ideal.
\end{enumerate}

We list below several important classes of rings that satisfy condition
$(\ast).$

\begin{example}
\label{estar} \ 
\end{example}

\begin{enumerate}
\item \text{Von Neumann regular rings:} In these rings, every ideal is
semiprime ($I=\sqrt{I}$). Thus, any quasi sdf-absorbing ideal is its own
radical and sdf-absorbing, making $(\ast)$ trivial.

\item \text{Rings of characteristic 2:} In such rings, every proper ideal of
$R$ is sdf-absorbing primary.

\item If $2$ is invertible, the notion of an sdf-absorbing ideal coincides
with that of a prime ideal and an sdf-absorbing primary ideal is exactly a
primary ideal. Consequently, in this setting, Condition $(\ast)$ reflects the
classical property that ideals with prime radicals are primary.
\end{enumerate}

\begin{theorem}
Let $R$ be a $0$-dimensional Noetherian ring such that $2\in U(R)$. Then $R$
satisfies the condition $(\ast)$.
\end{theorem}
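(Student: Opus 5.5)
Let $I$ be a quasi sdf-absorbing ideal of $R$; we must show that $I$ is sdf-absorbing primary. The plan has three steps: make $\sqrt{I}$ prime, make $I$ primary, and then read off the sdf-absorbing primary condition.

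\emph{Step 1: $\sqrt{I}$ is a maximal ideal.} Since $2\in U(R)$, the Proposition of Section 2 (quasi sdf-absorbing coincides with quasi primary when $2$ is a unit) shows that $\sqrt{I}$ is a prime ideal. Because $R$ is $0$-dimensional, every prime ideal is maximal. Hence $\sqrt{I}=M$ for some maximal ideal $M$ of $R$.

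\emph{Step 2: $I$ is $M$-primary.} This is the classical fact that an ideal whose radical is maximal is primary; I expect it to be the only step requiring real care. Suppose $xy\in I$ and $x\notin\sqrt{I}=M$. Then $x$ is a unit modulo $M$, so $xr=1+m$ for some $r\in R$ and $m\in M$. Since $m\in\sqrt{I}$, we have $m^{k}\in I$ for some $k$. The element $1+m$ is a unit modulo $I$, with inverse $1-m+m^{2}-\dots\pm m^{k-1}$ modulo $I$. It follows that $y\equiv y(1+m)u=xy\,ru\in I$ for a suitable $u$, so $y\in I$. Thus $I$ is primary. Noetherianity is not actually needed for this step. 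If one prefers a more structural argument, one can use that $R$ is Artinian, hence a finite product of local Artinian rings, and that $R/I$ is local with nilpotent maximal ideal, so every zero divisor of $R/I$ is nilpotent.

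\emph{Step 3: the sdf-absorbing primary condition.} Let $a^{2}-b^{2}=(a-b)(a+b)\in I$. Since $I$ is primary, either $a-b\in I$ or $a+b\in\sqrt{I}$, which is exactly the defining condition of an sdf-absorbing primary ideal. This is also consistent with Example \ref{estar}(3).
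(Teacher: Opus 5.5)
Your proof is correct and follows the paper's own route: $2\in U(R)$ forces $\sqrt{I}$ to be prime, zero-dimensionality makes it maximal, and an ideal with maximal radical is primary, hence sdf-absorbing primary. The only difference is that you prove the ``maximal radical implies primary'' step by hand rather than citing Atiyah--Macdonald Proposition 4.2, and you correctly observe that Noetherianity is not needed there.
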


\begin{proof}
Let $I$ be a quasi sdf-absorbing ideal of $R$. By definition, $L=\sqrt{I}$ is
an sdf-absorbing ideal. Since $2\in U(R)$, the sdf-absorbing property of $L$
is equivalent to the condition that $L$ is a prime ideal. Thus, $\sqrt
{I}=\mathfrak{p}$ for some $\mathfrak{p}\in\text{Spec}(R)$. In a
$0$-dimensional Noetherian ring, every prime ideal is a maximal ideal.
Therefore, $\mathfrak{p}=\mathfrak{m}$ for some $\mathfrak{m}\in\text{Max}%
(R)$. We utilize the following result, see \cite[Proposition 4.2]{AM1969}: In
a Noetherian ring if $Q$ is an ideal such that $\sqrt{Q}$ is a maximal ideal,
then $Q$ is primary. Since $\sqrt{I}=\mathfrak{m}$ is maximal, it follows
directly from the above characterization that $I$ is an $\mathfrak{m}$-primary
ideal. Hence $I$ is an sdf-absorbing primary ideal and Condition $(\ast)$ holds.
\end{proof}

\begin{proposition}
\label{P2} Let $R$ satisfy $(\ast)$, let $I,J$ be ideals of $R$, $\sqrt{I}=P$,
such that $I\subseteq J\subseteq P$. If $I$ is $P$-sdf-absorbing primary, then
$J$ is also $P$-sdf-absorbing primary.
\end{proposition}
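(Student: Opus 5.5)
Here ``$P$-sdf-absorbing primary'' is read as: an sdf-absorbing primary ideal whose radical is $P$. The plan is to reduce everything to the radical $P$ and then invoke condition $(\ast)$.

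\textbf{Step 1: the radical of $J$.} From $I\subseteq J\subseteq P$ and monotonicity of radicals we get
\[
P=\sqrt{I}\subseteq\sqrt{J}\subseteq\sqrt{P}=P,
\]
so $\sqrt{J}=P$. This also shows $J$ is proper, since $P=\sqrt{I}$ is proper: $I$ is a proper ideal and hence lies in some maximal ideal.

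\textbf{Step 2: $P$ is sdf-absorbing.} Since $I$ is sdf-absorbing primary, the structural result of \cite{KhashanCelikel} recalled in the introduction gives that $\sqrt{I}=P$ is an sdf-absorbing ideal. We can also check this directly. Let $a^2-b^2\in P$. Then $(a^2-b^2)^n\in I$ for some $n$, which does not factor as a difference of squares, so instead use the radical form of the condition. Since $I$ is sdf-absorbing primary, $\sqrt{I}$ is sdf-absorbing by \cite{KhashanCelikel}. This is the only nontrivial input, and I will simply cite it.

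\textbf{Step 3: apply $(\ast)$.} Consequently $\sqrt{J}=P$ is sdf-absorbing, i.e.\ $J$ is a quasi sdf-absorbing ideal of $R$. Because $R$ satisfies $(\ast)$, $J$ is an sdf-absorbing primary ideal. Its radical is $P$ by Step 1, so $J$ is $P$-sdf-absorbing primary.

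\textbf{Main obstacle.} The only point needing care is Step 2, namely passing from ``$I$ sdf-absorbing primary'' to ``$\sqrt{I}$ sdf-absorbing''. I rely on the cited result from \cite{KhashanCelikel} rather than reproving it. With that in hand, the rest is the radical sandwich of Step 1 followed by the definition of $(\ast)$.
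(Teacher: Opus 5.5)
Your proof is correct and is the paper's argument step for step: the radical sandwich gives $\sqrt{J}=P$ (and $J$ proper), the cited result of Khashan, Yetkin Celikel and Tekir gives that $P=\sqrt{I}$ is sdf-absorbing, so $J$ is quasi sdf-absorbing, and $(\ast)$ finishes. One cleanup: delete the abandoned ``direct check'' in Step 2, because its claim is false. In fact $(a^2-b^2)^n=c^2-d^2$, where $c$ and $d$ are the even- and odd-index parts of the binomial expansion of $(a+b)^n$, so that $c+d=(a+b)^n$ and $c-d=(a-b)^n$. Applying the sdf-absorbing primary condition to this pair gives a short direct proof of the cited fact.
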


\begin{proof}
By the monotonicity of the radical operator, the inclusion $I\subseteq
J\subseteq P$ implies $P\subseteq\sqrt{J}\subseteq\sqrt{P}=P$, hence $\sqrt
{J}=P$. Since $I$ is $P$-sdf-absorbing primary, by \cite[Theorem
1]{KhashanCelikel} $P$ is sdf-absorbing, then $J$ is a quasi sdf-absorbing
ideal. Under the assumption that $R$ satisfies condition $(\ast)$, every quasi
sdf-absorbing ideal is sdf-absorbing primary. Thus, $J$ is $P$-sdf-absorbing primary.
\end{proof}

\begin{proposition}
Let $R$ satisfy condition $(\ast)$ and let $\{Q_{i}\}_{i=1}^{n}$ be a finite
family of $P$-sdf-absorbing primary ideals. Then $Q=\bigcap_{i=1}^{n}Q_{i}$ is
a $P$-sdf-absorbing primary ideal.
\end{proposition}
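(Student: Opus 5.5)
The plan is to follow the same pattern as Proposition \ref{P2}: compute the radical of $Q$, show that it is sdf-absorbing, and then invoke condition $(\ast)$. No direct manipulation of the sdf-absorbing primary condition for $Q$ is needed.

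\textbf{Step 1: the radical of $Q$.} Since the intersection is finite, the radical commutes with it:
\[
\sqrt{Q}=\sqrt{\textstyle\bigcap_{i=1}^{n}Q_{i}}=\textstyle\bigcap_{i=1}^{n}\sqrt{Q_{i}}.
\]
By hypothesis $\sqrt{Q_{i}}=P$ for every $i$, so $\sqrt{Q}=P$. In particular $Q\subseteq Q_{1}$ is proper.

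\textbf{Step 2: $P$ is sdf-absorbing.} Each $Q_{i}$ is sdf-absorbing primary. By \cite[Theorem 1]{KhashanCelikel}, exactly as in the proof of Proposition \ref{P2}, its radical $P$ is therefore an sdf-absorbing ideal. Since $\sqrt{Q}=P$, the ideal $Q$ is quasi sdf-absorbing.

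\textbf{Step 3: apply $(\ast)$.} Because $R$ satisfies condition $(\ast)$, $Q$ is an sdf-absorbing primary ideal. Its radical is $P$, so $Q$ is $P$-sdf-absorbing primary.

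The only point needing care is the convention for ``$P$-sdf-absorbing primary'', namely that $\sqrt{Q_{i}}=P$. If that is the convention, the equality $\sqrt{Q}=P$ in Step 1 is routine and nothing here is a real obstacle.

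Alternatively, one can bypass $(\ast)$ and argue directly. Suppose $a^{2}-b^{2}\in Q$ and $a+b\notin P$. Then $a-b\in Q_{i}$ for every $i$, hence $a-b\in Q$. This shows the statement holds even without $(\ast)$. I would mention this as a remark and present the $(\ast)$-based argument, since it matches the section's theme.
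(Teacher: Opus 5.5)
Your proposal is correct and follows the paper's proof. You compute $\sqrt{Q}=\bigcap_{i}\sqrt{Q_i}=P$, note that $P$ is sdf-absorbing, and apply $(\ast)$; you rightly make explicit the step the paper uses tacitly, namely that the radical of an sdf-absorbing primary ideal is sdf-absorbing.

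Your closing remark is also correct and worth keeping. If $a^{2}-b^{2}\in Q$ and $a+b\notin P$, then $a-b\in Q_i$ for every $i$, so $a-b\in Q$. Hence the conclusion holds in every ring, and the hypothesis $(\ast)$ is superfluous.
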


\begin{proof}
The radical of a finite intersection is the intersection of the radicals:
$\sqrt{Q}=\sqrt{\bigcap_{i=1}^{n}Q_{i}}=\bigcap_{i=1}^{n}\sqrt{Q_{i}}=P.$
Since $P$ is an sdf-absorbing ideal, $Q$ is quasi sdf-absorbing by definition.
By condition $(\ast)$, $Q$ is an sdf-absorbing primary ideal with radical $P$.
\end{proof}

\begin{proposition}
Let $R$ and $R^{\prime}$ be rings and $f:R\rightarrow R^{\prime}$ be a
surjective ring homomorphism. If $R$ satisfies condition $(\ast)$, then
$R^{\prime}$ also satisfies condition $(\ast)$.
\end{proposition}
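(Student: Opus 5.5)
The plan is to pull a quasi sdf-absorbing ideal of $R^{\prime}$ back to $R$, apply $(\ast)$ there, and push the conclusion forward along $f$.

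\textbf{Setup.} Let $J$ be a quasi sdf-absorbing ideal of $R^{\prime}$ and put $I=f^{-1}(J)$. Then $I$ is a proper ideal of $R$, since $1\notin I$ because $J$ is proper. It contains $\ker f$, and surjectivity gives $f(I)=J$.

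\textbf{Transfer to $R$.} By Theorem \ref{T}(2), $I$ is a quasi sdf-absorbing ideal of $R$. Since $R$ satisfies $(\ast)$, $I$ is an sdf-absorbing primary ideal of $R$.

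\textbf{Transfer back to $R^{\prime}$.} It remains to show that $J=f(I)$ is sdf-absorbing primary. We use Remark \ref{rr}-style reasoning with $\sqrt{J}=f(\sqrt{I})$, which holds because $\ker f\subseteq I$ and $f$ is surjective; in particular $\sqrt{f^{-1}(J)}=f^{-1}(\sqrt{J})$.
\begin{enumerate}
\item Take $s,t\in R^{\prime}$ with $s^{2}-t^{2}\in J$.
\item Choose preimages $a,b\in R$ with $f(a)=s$ and $f(b)=t$.
\item Then $f(a^{2}-b^{2})\in J$, so $a^{2}-b^{2}\in f^{-1}(J)=I$.
\item Since $I$ is sdf-absorbing primary, $a-b\in I$ or $a+b\in\sqrt{I}$.
\item Applying $f$ gives $s-t\in f(I)=J$ or $s+t\in f(\sqrt{I})\subseteq\sqrt{J}$.
\end{enumerate}
Hence $J$ is sdf-absorbing primary. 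Since $J$ was arbitrary, $R^{\prime}$ satisfies $(\ast)$.

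\textbf{Main obstacle.} The only delicate point is the bookkeeping between $J$ and its pullback. Two facts are needed: $I=f^{-1}(J)$ genuinely contains $\ker f$, which is what makes $f(I)=J$ exact, and $f(\sqrt{I})\subseteq\sqrt{J}$, which is immediate. Because the argument works with full preimages rather than arbitrary lifts of ideals, there is no difficulty with elements of the kernel. We also do not need the nonzero-element restriction in the definition: the sdf-absorbing primary condition as defined in \cite{KhashanCelikel} quantifies over all $a,b$, so lifts being zero or nonzero is irrelevant.
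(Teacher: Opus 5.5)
Your proof is correct and follows the paper's route. You pull $J$ back to $f^{-1}(J)$, get that it is quasi sdf-absorbing (you cite Theorem \ref{T}(2), while the paper appeals to the equivalent preimage property), apply $(\ast)$, and push forward using $\ker f\subseteq f^{-1}(J)$. The only difference is that you verify the push-forward of the sdf-absorbing primary property directly, whereas the paper cites \cite[Proposition 4 (2)]{KhashanCelikel} for that step.
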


\begin{proof}
Let $Q^{\prime}$ be a quasi sdf-absorbing ideal in $R^{\prime}$. To satisfy
condition $(\ast)$, we must show that $Q^{\prime}$ is an sdf-absorbing primary
ideal. Since $f$ is surjective, we can consider the preimage $Q=f^{-1}%
(Q^{\prime})$, which is an ideal of $R$ containing $\ker(f)$. First, we
observe that $\sqrt{Q}=f^{-1}(\sqrt{Q^{\prime}})$. Since $Q^{\prime}$ is quasi
sdf-absorbing, its radical $\sqrt{Q^{\prime}}$ is an sdf-absorbing ideal in
$R^{\prime}$. It is a known property that the preimage of an sdf-absorbing
ideal under a surjective homomorphism is also sdf-absorbing. Thus, $\sqrt{Q}$
is sdf-absorbing in $R$, which by definition makes $Q$ a quasi sdf-absorbing
ideal in $R$. Since $R$ satisfies condition $(\ast)$, it follows that $Q$ is
an sdf-absorbing primary ideal. Finally, since $f$ is surjective and
$\ker(f)\subseteq Q$, the image $f(Q)=Q^{\prime}$ inherits the sdf-absorbing
primary property, by \cite[Proposition 4 (2)]{KhashanCelikel}. Therefore,
$R^{\prime}$ satisfies condition $(\ast)$.
\end{proof}

\begin{corollary}
Let $R$ be a ring satisfying condition $(\ast)$. For any ideal $J$ of $R$, the
quotient ring $R/J$ also satisfies condition $(\ast)$.
\end{corollary}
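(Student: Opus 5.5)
This is an immediate consequence of the preceding proposition, applied to the canonical projection $\pi:R\rightarrow R/J$, $r\mapsto r+J$. The map $\pi$ is a surjective ring homomorphism. Since $R$ satisfies condition $(\ast)$ by hypothesis, the proposition on surjective homomorphisms shows directly that $R^{\prime}=R/J$ satisfies condition $(\ast)$.

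The only point to check is that the statement is meaningful. If $J=R$, then $R/J$ is the zero ring, which has no proper ideals, so condition $(\ast)$ holds vacuously. If $J$ is proper, the proposition applies as stated.

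To make the argument more transparent, one can also unwind it. Start with a quasi sdf-absorbing ideal of $R/J$. By the correspondence theorem it has the form $I/J$ for an ideal $I$ of $R$ with $J\subseteq I$. Part (2) of the corollary to Theorem \ref{T} shows that $I$ is quasi sdf-absorbing in $R$. Condition $(\ast)$ on $R$ then makes $I$ sdf-absorbing primary. Finally, \cite[Proposition 4 (2)]{KhashanCelikel}, applied to $\pi$ with $\ker\pi=J\subseteq I$, shows that $I/J$ is sdf-absorbing primary.

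There is no real obstacle here. The one step that relies on the literature rather than on this paper is the transfer of the sdf-absorbing primary property to images, and that is already invoked in the preceding proposition.
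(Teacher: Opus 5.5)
Your proposal is correct and is the paper's proof: apply the preceding proposition on surjective homomorphisms to the canonical projection $\pi:R\rightarrow R/J$. Your unwound version, which handles the pullback step with part (2) of the corollary to Theorem \ref{T}, is also valid, and it is slightly more careful than the proposition's appeal to a general ``known property'' of preimages, because it uses that property only for radical ideals, where the paper actually proves it.
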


\begin{proof}
Consider the canonical projection $\pi:R\rightarrow R/J$, which is a
surjective ring homomorphism. Since $R$ satisfies condition $(\ast)$, it
follows directly from the previous Proposition that its surjective image,
$R/J$, must also satisfy condition $(\ast)$.
\end{proof}

Let $S$ be a multiplicative subset of a ring $R$. An ideal $I$ of $R$ is said
to be $S$-saturated if $I = (S^{-1}I) \cap R$ under the natural homomorphism
$\phi: R \to S^{-1}R$. Equivalently, $I$ is $S$-saturated if for any $s \in S$
and $r \in R$, the condition $sr \in I$ implies $r \in I$.

\begin{lemma}
\label{lemma} Let $S$ be a multiplicative subset of $R$ and $I$ an
$S$-saturated ideal of $R$.

\begin{enumerate}
\item $\sqrt{I}$ is $S$-saturated.

\item Let $S\cap I=\emptyset$. If $S^{-1}I$ is sdf-absorbing in $S^{-1}R$,
then $I$ is sdf-absorbing in $R$.
\end{enumerate}
\end{lemma}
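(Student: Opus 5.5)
For part (1) I would argue directly from the elementwise form of saturation. Let $s\in S$ and $r\in R$ with $sr\in\sqrt{I}$. Then $(sr)^{n}=s^{n}r^{n}\in I$ for some $n\geq1$. Since $S$ is multiplicatively closed, $s^{n}\in S$. Because $I$ is $S$-saturated, this forces $r^{n}\in I$, that is, $r\in\sqrt{I}$. Hence $\sqrt{I}$ is $S$-saturated. This step is routine.

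For part (2) I would first record that $I$ is proper, which follows from $S\cap I=\emptyset$ (in particular $1\notin I$). Next I would take nonzero $a,b\in R$ with $a^{2}-b^{2}\in I$ and pass to $S^{-1}R$. There
\[
\left(\tfrac{a}{1}\right)^{2}-\left(\tfrac{b}{1}\right)^{2}=\tfrac{a^{2}-b^{2}}{1}\in S^{-1}I .
\]
If both $a/1$ and $b/1$ are nonzero, the sdf-absorbing hypothesis on $S^{-1}I$ gives $(a-b)/1\in S^{-1}I$ or $(a+b)/1\in S^{-1}I$. Saturation, in the form $(S^{-1}I)\cap R=I$, then pulls this back to $a-b\in I$ or $a+b\in I$.

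The main obstacle is the degenerate case where $a/1=0$ (or $b/1=0$) even though $a\neq0$, because the definition of sdf-absorbing only controls nonzero elements. Here I would use saturation again. Suppose $a/1=0$; then $ta=0\in I$ for some $t\in S$, so $a\in I$. Then $b^{2}=a^{2}-(a^{2}-b^{2})\in I$, so $b/1$ has square in $S^{-1}I$.

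To finish this case I would invoke the known fact from \cite{AB2024} that an sdf-absorbing ideal $J$ satisfies: $x^{2}\in J$ implies $x\in J$. I expect this to follow from their basic results by applying the definition to the pair $(x+1)$ and $1$, or to the pair $(x,-x)$, with care about zero divisors. This gives $b/1\in S^{-1}I$ whenever $b/1\neq0$, and if $b/1=0$ saturation gives $b\in I$ directly. Either way $b\in I$ by saturation, so $a-b\in I$.

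The symmetric case $b/1=0$ is handled identically. If the radical-closure fact is not directly available, I would instead pass through part (1) and work with $\sqrt{I}$, since the paper's applications are to radicals.
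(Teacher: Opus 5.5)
Part (1) is fine and matches the paper. The gap is in your degenerate case of part (2). The ``known fact'' that every sdf-absorbing ideal $J$ satisfies $x^{2}\in J\Rightarrow x\in J$ is false for $J=0$. In $\mathbb{Z}_{4}$ the zero ideal is sdf-absorbing: the nonzero pairs with $a^{2}=b^{2}$ are $a,b\in\{1,3\}$ and $a=b=2$, and each has $a-b=0$ or $a+b=0$. Yet $2^{2}=0$. Neither derivation you sketch could work in any case: the pair $(x+1,1)$ needs $2x\in J$ already, and $(x,-x)$ only returns the tautology $0\in J$. What is true is that a \emph{nonzero} sdf-absorbing ideal is radical. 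If $0\neq x$, $x^{2}\in J$ and $0\neq i\in J$, apply the definition to $x,i$ to get $x-i\in J$ or $x+i\in J$, hence $x\in J$.

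No other argument can fill this step, because part (2) as stated can fail when $S^{-1}I=0$. Take $R=\mathbb{Z}_{4}\times\mathbb{Z}_{2}$, $S=\{(1,1),(1,0)\}$ and $I=0\times\mathbb{Z}_{2}=\ker(R\to S^{-1}R)$. Then $I$ is $S$-saturated, $S\cap I=\emptyset$, $S^{-1}R\cong\mathbb{Z}_{4}$, and $S^{-1}I=0$ is sdf-absorbing. However, $a=(2,0)$ and $b=(0,1)$ are nonzero with $a^{2}-b^{2}=(0,1)\in I$, while $a-b=a+b=(2,1)\notin I$. Here $b/1=0$, which is exactly your degenerate case.

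The paper's own proof mishandles the same case: from $ua=0$ and saturation it concludes $a=0$, whereas saturation only yields $a\in I$. The statement becomes correct under either of two extra hypotheses. If $S^{-1}I\neq0$, then $S^{-1}I$ is radical and your argument goes through verbatim. If $I$ is a radical ideal, then $a\in I$ gives $b^{2}\in I$, so $b\in I$ directly in $R$. The second version covers the paper's only application of the lemma, which is to $\sqrt{I}$. So your fallback of working with radicals is the right repair---but it proves a modified statement, not the one written.
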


\begin{proof}
(1) Suppose $sx\in\sqrt{I}$ for some $s\in S$ and $x\in R$. Then there exists
$n\in\mathbb{N}$ such that $(sx)^{n}=s^{n}x^{n}\in I$. Since $s\in S$ and $S$
is multiplicatively closed, $s^{n}\in S$. Since $I$ is $S$-saturated,
$s^{n}x^{n}\in I$ implies $x^{n}\in I$, thus $x\in\sqrt{I}$. This proves
$\sqrt{I}$ is $S$-saturated.

(2) Let $0\neq a,b\in R$ such that $a^{2}-b^{2}\in I$. Then $\left(  \frac
{a}{1}\right)  ^{2}-\left(  \frac{b}{1}\right)  ^{2}=\frac{a^{2}-b^{2}}{1}\in
S^{-1}I$. If $\frac{a}{1}=0$, then $ua=0$ for some $u\in S.$ Since $I$ is
$S$-saturated, we have $a=0$, a contradiction. Hence $\frac{a}{1}$ is nonzero
in $S^{-1}R.$ Similarly, one can have that $\frac{b}{1}$ is nonzero. Since
$S^{-1}I$ is sdf-absorbing, we have either $\frac{a-b}{1}\in S^{-1}I$ or
$\frac{a+b}{1}\in S^{-1}I$. This means $s(a-b)\in I$ or $t(a+b)\in I$ for some
$s,t\in S$. As $I$ is $S$-saturated, we conclude $a-b\in I$ or $a+b\in I$, as needed.
\end{proof}

\begin{theorem}
If a ring $R$ satisfies condition $(*)$, then $S^{-1}R$ satisfies condition
$(*)$ for any multiplicative subset $S \subseteq R$.
\end{theorem}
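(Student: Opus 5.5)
The plan is to pull a quasi sdf-absorbing ideal of $S^{-1}R$ back to $R$, apply condition $(\ast)$ in $R$, and push the conclusion forward again. Let $\mathcal{Q}$ be a quasi sdf-absorbing ideal of $S^{-1}R$, and let $\phi:R\to S^{-1}R$ be the natural map. Put $Q=\phi^{-1}(\mathcal{Q})$. Then $Q$ is an $S$-saturated ideal of $R$ with $Q\cap S=\emptyset$ (because $\mathcal{Q}$ is proper), and $S^{-1}Q=\mathcal{Q}$.

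\textbf{Step 1: $Q$ is quasi sdf-absorbing.} By Theorem \ref{T}(2), the preimage of a quasi sdf-absorbing ideal is quasi sdf-absorbing, so $Q$ is quasi sdf-absorbing in $R$. As a cross-check, this can also be obtained from Lemma \ref{lemma}:
\begin{itemize}
\item by Lemma \ref{lemma}(1), $\sqrt{Q}$ is $S$-saturated;
\item $S^{-1}\sqrt{Q}=\sqrt{S^{-1}Q}=\sqrt{\mathcal{Q}}$ is sdf-absorbing;
\item so Lemma \ref{lemma}(2) gives that $\sqrt{Q}$ is sdf-absorbing.
\end{itemize}
The only care needed in the second route is that $\sqrt{Q}\cap S=\emptyset$, which holds since $Q\cap S=\emptyset$.

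\textbf{Step 2: apply $(\ast)$ in $R$.} Since $R$ satisfies $(\ast)$, $Q$ is an sdf-absorbing primary ideal of $R$.

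\textbf{Step 3: transfer back to $S^{-1}R$.} It remains to show that $S^{-1}Q=\mathcal{Q}$ is sdf-absorbing primary. This is where most of the work lies, because no localization result for sdf-absorbing primary ideals has been stated earlier in the paper, so I would verify it directly. Take $x=a/s$ and $y=b/t$ with $x^2-y^2\in\mathcal{Q}$, and rewrite them over the common denominator $st$ as $x=at/(st)$ and $y=bs/(st)$. Then $(at)^2-(bs)^2\in\mathcal{Q}\cap R$ as an image, so $u\big((at)^2-(bs)^2\big)\in Q$ for some $u\in S$. Saturation then puts $(at)^2-(bs)^2$ itself in $Q$. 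Applying the sdf-absorbing primary property of $Q$, we get $at+bs\in\sqrt{Q}$ or $at-bs\in Q$. Dividing by $st$ gives $x+y\in S^{-1}\sqrt{Q}=\sqrt{\mathcal{Q}}$ or $x-y\in\mathcal{Q}$.

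Alternatively, one could cite the localization result for sdf-absorbing primary ideals in \cite{KhashanCelikel}, if it is available there. Either route shows that $\mathcal{Q}$ is sdf-absorbing primary, so $S^{-1}R$ satisfies $(\ast)$.
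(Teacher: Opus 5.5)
Your proof is correct and has the same skeleton as the paper's: contract to $R$, apply $(\ast)$ there, and localize back. The individual steps are justified differently, though.

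For the first step, the paper uses Lemma \ref{lemma}: $S^{-1}\sqrt{I}$ being sdf-absorbing, together with saturation of $\sqrt{I}$, gives that $\sqrt{I}$ is sdf-absorbing. Your main argument instead applies Theorem \ref{T}(2) to $\phi:R\to S^{-1}R$. This needs no saturation. It also sidesteps a slip in the proof of Lemma \ref{lemma}(2), where $ua=0$ plus saturation is said to give $a=0$; it only gives $a\in I$. For the radical ideal $\sqrt{I}$ this case is harmless, since then $b\in\sqrt{I}$ as well.

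You also state explicitly that the ideal upstairs is the contraction $Q=\phi^{-1}(\mathcal{Q})$. The paper writes $S^{-1}I$ without saying which $I$. Its appeal to Lemma \ref{lemma} requires $I$ to be $S$-saturated, and its own example $I=15\mathbb{Z}$, $S=\{5^n\}$ shows that the pull-back step fails otherwise.

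For the final step, the paper simply cites \cite[Proposition 3 (1)]{KhashanCelikel}, while your common-denominator computation makes the argument self-contained. The detour there through $u$ and saturation is unnecessary. Since $\phi\big((at)^2-(bs)^2\big)=(st)^2(x^2-y^2)\in\mathcal{Q}$, the element $(at)^2-(bs)^2$ lies in $Q=\phi^{-1}(\mathcal{Q})$ directly.
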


\begin{proof}
Let $S^{-1}I$ be a quasi sdf-absorbing ideal in $S^{-1}R$. Then $S^{-1}I$ is
proper, and so $S\cap I=\emptyset.$ To prove that $S^{-1}R$ satisfies
condition $(\ast)$, we must show that $S^{-1}I$. is an sdf-absorbing primary
ideal of $S^{-1}R.$ Since $S^{-1}\sqrt{I}=\sqrt{S^{-1}I}$ is sdf-absorbing
ideal, $\sqrt{I}$ is sdf-absorbing in $R$ by Lemma \ref{lemma}. Hence, $I$ is
quasi sdf-absorbing in $R$. Since $R$ is assumed to satisfy condition $(\ast
)$, it follows that $I$ is sdf-absorbing primary. By \cite[Proposition 3
(1)]{KhashanCelikel}, its extension $S^{-1}I$ is an sdf-absorbing primary
ideal in $S^{-1}R$ and therefore, $S^{-1}R$ satisfies condition $(\ast).$
\end{proof}

\begin{theorem}
Let $R$ be a ring and $M$ be an $R$-module. If $R$ satisfies condition
$(\ast)$, then the idealization $R \ltimes M$ satisfies condition $(\ast)$ for
all ideals of the form $I \ltimes M$.
\end{theorem}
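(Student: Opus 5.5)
The plan is to reduce everything to the ring $R$ via the earlier proposition on idealizations: $I\ltimes M$ is quasi sdf-absorbing in $R\ltimes M$ if and only if $I$ is quasi sdf-absorbing in $R$. So suppose $I\ltimes M$ is a (proper) quasi sdf-absorbing ideal of $R\ltimes M$. Then $I$ is proper in $R$, and $I$ is quasi sdf-absorbing in $R$. Since $R$ satisfies $(\ast)$, $I$ is an sdf-absorbing primary ideal of $R$. It remains to lift the sdf-absorbing primary property from $I$ to $I\ltimes M$. For this step I will either invoke the idealization result for sdf-absorbing primary ideals in \cite{KhashanCelikel}, if it is available, or verify it directly.

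For the direct verification, take $x=(a,m_1)$ and $y=(b,m_2)$ with $x^2-y^2\in I\ltimes M$. Since $x^2-y^2=(a^2-b^2,\,2am_1-2bm_2)$, the hypothesis says exactly that $a^2-b^2\in I$, because the second coordinate is unrestricted. As $I$ is sdf-absorbing primary, we get $a-b\in I$ or $a+b\in\sqrt{I}$.

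\emph{Case $a-b\in I$.} Then $x-y=(a-b,\,m_1-m_2)\in I\ltimes M$.

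\emph{Case $a+b\in\sqrt{I}$.} Then $x+y=(a+b,\,m_1+m_2)\in\sqrt{I}\ltimes M=\sqrt{I\ltimes M}$, using \cite[Theorem 3.2]{Anderson2009} as recalled before the idealization proposition.

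Hence $I\ltimes M$ is sdf-absorbing primary, so condition $(\ast)$ holds for every ideal of the form $I\ltimes M$.

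The main point to check is that the definition of sdf-absorbing primary places the radical on the correct summand, namely $a+b\in\sqrt{I}$ or $a-b\in I$. The roles of $a+b$ and $a-b$ are symmetric under $b\mapsto -b$, so the choice of summand does not matter. Also, there is no nonzero-element restriction in this definition, so elements with first coordinate $0$ need no special treatment. The only other subtlety is properness: $I\ltimes M$ is proper exactly when $I$ is proper, so the reduction is legitimate.
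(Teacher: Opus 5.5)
Your proof is correct and follows the same route as the paper: from $I\ltimes M$ quasi sdf-absorbing you get $I$ quasi sdf-absorbing in $R$, then apply $(\ast)$, then lift the sdf-absorbing primary property back to $I\ltimes M$ using $\sqrt{I\ltimes M}=\sqrt{I}\ltimes M$. The only differences are in how the two transfer steps are justified: you use the paper's own idealization proposition where the paper cites \cite[Theorem 4.19]{AB2024}, and you check the lifting step directly where the paper cites \cite[Theorem 8 (1)]{KhashanCelikel}.
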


\begin{proof}
Let $\mathcal{I} = I \ltimes M$ be an ideal of the ring $\mathcal{R} = R
\ltimes M$. Then $\sqrt{I \ltimes M} = \sqrt{I} \ltimes M$. Suppose that
$\mathcal{I}$ is a \text{quasi sdf-absorbing} ideal in $\mathcal{R}$. By
definition, its radical $\sqrt{I} \ltimes M$ is an sdf-absorbing ideal in
$\mathcal{R}$. From \cite[Theorem 4.19]{AB2024} $\sqrt{I}$ is an sdf-absorbing
ideal in $R$. This implies that $I$ is a \text{quasi sdf-absorbing} ideal of
$R$. Since $R$ satisfies condition $(\ast)$, the ideal $I$ is necessarily
\text{sdf-absorbing primary} and so $\mathcal{I} = I \ltimes M$ is
sdf-absorbing primary in $\mathcal{R}$, by \cite[Theorem 8 (1)]%
{KhashanCelikel}. We conclude that $R \ltimes M$ inherits condition $(\ast)$
for this class of ideals.
\end{proof}

\begin{theorem}
Let $K$ be a field. The polynomial ring $K[x]$ always satisfies Condition
$(\ast)$.
\end{theorem}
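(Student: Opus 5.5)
The plan is to split on the characteristic of $K$, since the earlier results of the paper handle the two cases very differently.

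\textbf{Case $\operatorname{char}K=2$.} Then $\operatorname{char}K[x]=2$, and by Example \ref{estar}(2) every proper ideal of $K[x]$ is sdf-absorbing primary. To be safe I would re-derive this directly. If $a^{2}-b^{2}\in I$, then $a^{2}-b^{2}=(a+b)^{2}$ in characteristic $2$. Hence $a+b\in\sqrt{I}$, which is exactly the sdf-absorbing primary condition. So condition $(\ast)$ holds trivially in this case.

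\textbf{Case $\operatorname{char}K\neq 2$.} Now $2\in U(K[x])$, so by the Proposition in Section 2 (quasi sdf-absorbing $\Leftrightarrow$ quasi primary when $2$ is a unit), a quasi sdf-absorbing ideal $I$ has $\sqrt{I}$ prime. I would then use that $K[x]$ is a PID and write $I=(f)$.
\begin{itemize}
\item If $f=0$, then $I=0$ is prime, hence primary.
\item Otherwise $f$ is a nonzero nonunit, since $I$ is proper. Factor $f=u\,p_{1}^{e_{1}}\cdots p_{r}^{e_{r}}$ into distinct monic irreducibles. Then $\sqrt{I}=(p_{1}\cdots p_{r})=(p_{1})\cap\cdots\cap(p_{r})$.
\item This ideal is prime only when $r=1$, because distinct maximal ideals $(p_i)$ are comaximal and their intersection is not prime for $r\ge 2$.
\item Hence $I=(p^{e})$ is a power of the maximal ideal $(p)$. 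Such an ideal is $(p)$-primary, either by \cite[Proposition 4.2]{AM1969} or since $K[x]$ is Noetherian with $\sqrt{I}$ maximal.
\end{itemize}

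Finally, I would check that every primary ideal is sdf-absorbing primary. If $a^{2}-b^{2}=(a-b)(a+b)\in I$, then either $a-b\in I$ or $a+b\in\sqrt{I}$, by the definition of primary. Equivalently, this is Example \ref{estar}(3): when $2$ is invertible, sdf-absorbing primary means primary. Hence $I$ is sdf-absorbing primary, and $(\ast)$ holds.

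The only step needing care is showing that $\sqrt{(f)}$ prime forces $f$ to be, up to a unit, a power of a single irreducible. This is the standard radical computation in a UFD together with the comaximality of distinct maximal ideals. Everything else reduces to results already stated in the paper.
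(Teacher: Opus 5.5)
Your proof is correct. Its skeleton matches the paper's: split on the characteristic, use that $K[x]$ is a PID, show $\sqrt{I}$ is prime, then conclude that $I$ is primary and hence sdf-absorbing primary. The difference is in how you get primeness of $\sqrt{I}$.

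\textbf{The paper's route.} It does not use Proposition 2.1. It writes $\sqrt{I}=(p_1\cdots p_n)$ and, assuming $n>1$, uses the Chinese Remainder Theorem to produce $g$ with $g\equiv 1\pmod{p_1}$ and $g\equiv -1\pmod{p_2\cdots p_n}$. Then $g^2-1\in\sqrt{I}$, while neither $g-1$ nor $g+1$ lies in $\sqrt{I}$, since either would force some $p_i$ to divide the unit $2$. This is an explicit witness to the failure of the sdf-absorbing condition, essentially a hands-on instance of the comaximal-primes criterion in Theorem 2.1.

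\textbf{Your route.} You get primeness of $\sqrt{I}$ directly from ``$2$ a unit implies quasi sdf-absorbing equals quasi primary.'' This is shorter, and it shows the result depends only on $K[x]$ being a domain of dimension at most one. Once $\sqrt{I}$ is prime, your factorization step is not needed: a prime of $K[x]$ is either $0$ (forcing $I=0$) or maximal (forcing $I$ to be primary). So the same argument proves condition $(\ast)$ for any domain of dimension at most one in which $2$ is a unit.

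You also treat $I=0$ explicitly. You spell out that ``$\sqrt{I}$ maximal implies primary'' and that ``primary implies sdf-absorbing primary.'' The paper's write-up passes over these points; its factorization $\sqrt{I}=(p_1\cdots p_n)$ tacitly assumes $f\neq 0$.
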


\begin{proof}
If $char(K)=2,$ then the claim is clear by Example \ref{estar}(2). So, suppose
that $char(K)\neq2$. Let $I\subseteq K[x]$ be a quasi sdf-absorbing ideal. By
definition, its radical $L=\sqrt{I}$ is an sdf-absorbing ideal, meaning that
for any $a,b\in K[x]$, if $a^{2}-b^{2}\in L$, then $(a-b)\in L$ or $(a+b)\in
L$. We aim to show that $I$ is a primary ideal. \newline Since $K[x]$ is a
Principal Ideal Domain (PID), every ideal is generated by a single polynomial.
Let $I=(f)$ and $L=\sqrt{I}=(p)$. In a PID, the radical of an ideal generated
by $f$ is generated by the product of the distinct irreducible factors of $f$.
Thus, we can write $p=p_{1}p_{2}\dots p_{n},$ where $p_{i}$ are distinct
irreducible polynomials in $K[x]$. We aim to show $n=1$. Suppose $n>1$. We use
the Chinese Remainder Theorem to construct a polynomial $g$ such that:%
\[%
\begin{cases}
g\equiv1\pmod{p_1}\\
g\equiv-1\pmod{p_2 \dots p_n}
\end{cases}
\]
This is possible because $p_{1}$ and the product $p_{2}\dots p_{n}$ are
relatively prime. Let $a=g$ and $b=1$. Then $a^{2}-b^{2}=(g-1)(g+1)$. Since
$g\equiv1\pmod{p_1}$, we have $p_{1}\mid(g-1)$. Also, since $g\equiv
-1\pmod{p_2 \dots p_n}$, we have $(p_{2}\dots p_{n})\mid(g+1)$. Now, since
$p_{1}$ and $p_{2}\dots p_{n}$ are coprime, then $p\mid(g-1)(g+1)$ which means
$g^{2}-1\in(p)=L.$ Since $L$ is sdf-absorbing, either $(g-1)\in L$ or
$(g+1)\in L$. Hence, we have the following cases:

Case I: If $(g-1)\in L=(p_{1}\dots p_{n})$, then $p_{2}\mid(g-1)$. But our
construction says $g\equiv-1\pmod{p_2}$, so $g-1\equiv-2\pmod{p_2}$. As
$\text{char}(K)\neq2$, this implies $p_{2}$ is a unit, a contradiction.

Case II. If $(g+1)\in L$, then $p_{1}\mid(g+1)$. But $g\equiv1\pmod{p_1}$, so
$g+1\equiv2\pmod{p_1}$, again implying $p_{1}$ is a unit, a contradiction.

The assumption $n>1$ leads to a contradiction. Thus $n=1$, meaning $L=(p_{1})$
is prime. This ensures $I$ is a primary ideal, and specifically, an
sdf-absorbing primary ideal.
\end{proof}

\end{document}